\documentclass[psamsfonts]{amsart}

\usepackage[T1]{fontenc}
\usepackage{amssymb}

\usepackage[dvips]{graphicx}

\markboth{left head}{right head}

\usepackage{graphicx}
\usepackage{amssymb}                       
\usepackage{amsmath}
\usepackage{color}
\usepackage{times}

\usepackage[unicode,bookmarks,colorlinks]{hyperref}
\hypersetup{
   linkcolor=brickred,
}

\definecolor{mahogany}{cmyk}{0, 0.77, 0.87, 0}
\definecolor{salmon}{cmyk}{0, 0.53, 0.38, 0}
\definecolor{melon}{cmyk}{0, 0.46, 0.50, 0}
\definecolor{yellowgreen}{cmyk}{0.44, 0, 0.74, 0}
\definecolor{brickred}{cmyk}{0, 0.89, 0.94, 0.28}
\definecolor{OliveGreen}{cmyk}{0.64, 0, 0.95, 0.40}
\definecolor{RawSienna}{cmyk}{0, 0.72, 1.0, 0.45}
\definecolor{ZurichRed}{rgb}{1, 0, 0} 

\usepackage{fancyhdr}
\pagestyle{fancy}


\usepackage{amsmath,amstext,amssymb,amsopn,amsthm}
\usepackage{amsmath,amssymb,amsthm}
\usepackage[mathscr]{eucal}

\reversemarginpar

\definecolor{rb}{rgb}{0.1,0.2, 0.7}

\pagestyle{headings}

\begin{document}

\newtheorem{lemma}[thm]{Lemma}
\newtheorem{proposition}{Proposition}
\newtheorem{theorem}{Theorem}[section]
\newtheorem{deff}[thm]{Definition}
\newtheorem{case}[thm]{Case}
\newtheorem{prop}[thm]{Proposition}
\newtheorem{example}{Example}

\newtheorem{corollary}{Corollary}

\theoremstyle{definition}
\newtheorem{remark}{Remark}

\numberwithin{equation}{section}
\numberwithin{definition}{section}
\numberwithin{corollary}{section}

\numberwithin{theorem}{section}

\numberwithin{remark}{section}
\numberwithin{example}{section}
\numberwithin{proposition}{section}

\newcommand{\gap}{\lambda_{2,D}^V-\lambda_{1,D}^V}
\newcommand{\gapR}{\lambda_{2,R}-\lambda_{1,R}}
\newcommand{\bD}{\mathrm{I\! D\!}}
\newcommand{\calD}{\mathcal{D}}
\newcommand{\calA}{\mathcal{A}}

\newcommand{\conjugate}[1]{\overline{#1}}
\newcommand{\abs}[1]{\left| #1 \right|}
\newcommand{\cl}[1]{\overline{#1}}
\newcommand{\expr}[1]{\left( #1 \right)}
\newcommand{\set}[1]{\left\{ #1 \right\}}

\newcommand{\calC}{\mathcal{C}}
\newcommand{\calE}{\mathcal{E}}
\newcommand{\calF}{\mathcal{F}}
\newcommand{\Rd}{\mathbb{R}^d}
\newcommand{\BR}{\mathcal{B}(\Rd)}
\newcommand{\R}{\mathbb{R}}
\newcommand{\T}{\mathbb{T}}
\newcommand{\D}{\mathbb{D}}

\newcommand{\al}{\alpha}
\newcommand{\RR}[1]{\mathbb{#1}}
\newcommand{\bR}{\mathrm{I\! R\!}}
\newcommand{\ga}{\gamma}
\newcommand{\om}{\omega}
\newcommand{\A}{\mathbb{A}}
\newcommand{\bH}{\mathbb{H}}

\newcommand{\bb}[1]{\mathbb{#1}}
\newcommand{\bI}{\bb{I}}
\newcommand{\bN}{\bb{N}}

\newcommand{\uS}{\mathbb{S}}
\newcommand{\M}{{\mathcal{M}}}
\newcommand{\calB}{{\mathcal{B}}}

\newcommand{\W}{{\mathcal{W}}}

\newcommand{\m}{{\mathcal{m}}}

\newcommand {\mac}[1] { \mathbb{#1} }

\newcommand{\bC}{\Bbb C}

\newtheorem{rem}[theorem]{Remark}
\newtheorem{dfn}[theorem]{Definition}
\theoremstyle{definition}
\newtheorem{ex}[theorem]{Example}
\numberwithin{equation}{section}

\newcommand{\Pro}{\mathbb{P}}
\newcommand\F{\mathcal{F}}
\newcommand\E{\mathbb{E}}
\newcommand\e{\varepsilon}
\def\H{\mathcal{H}}
\def\t{\tau}

\title[Maximal inequalities]{A sharp two-weight inequality\\ for fractional maximal operators}

\author{Rodrigo Ba\~nuelos}\thanks{R.~Ba\~nuelos was supported in part by NSF Grant 1854709-DMS}
\address{Department of Mathematics, Purdue University, West Lafayette, IN 47907, USA}
\email{banuelos@math.purdue.edu}

\author{Adam Os\k ekowski}
\address{Department of Mathematics, Informatics and Mechanics\\
 University of Warsaw\\
Banacha 2, 02-097 Warsaw\\
Poland}
\email{ados@mimuw.edu.pl}

\thanks{}
\subjclass[2010]{Primary: 42B25. Secondary: 46E30, 60G42.}
\keywords{Maximal, dyadic, Bellman function, best constants}

\begin{abstract}
The paper is devoted to two-weight estimates for the fractional maximal operators $\mathcal{M}^\alpha$ on general probability spaces equipped with a tree-like structure. For given $1<p\leq q<\infty$, we study the sharp universal upper bound for the norm 
$ \|\mathcal{M}^\alpha\|_{L^p(v)\to L^q(u)}$, where $(u,v)$ is an arbitrary pair of weights satisfying the Sawyer testing condition. The proof is based on the abstract Bellman function method, which reveals an unexpected connection of the above problem with the sharp version of the classical Sobolev imbedding theorem. 
\end{abstract}

\maketitle

\section{Introduction}
Our motivation comes from the questions concerning the best constants in weighted $L^p\to L^q$ inequalities  for fractional maximal operators on $\R^d$. To present the results in an appropriate context, let us start with the necessary background and notation. For a fixed constant $0\leq \alpha<d$, the fractional maximal operator $\mathcal{M}^\alpha$ acts on locally integrable functions $\varphi$ on $\R^d$ by the formula
$$ \mathcal{M}^\alpha\varphi(x)=\sup\left\{|Q|^{\alpha/d}\langle |\varphi|\rangle_Q:Q\subset \R^d\mbox{ is a cube containing }x\right\}.$$
Here $|Q|$ denotes the Lebesgue measure of $Q$, $\langle \psi\rangle_Q=\frac{1}{|Q|}\int_Q \psi$ stands for the average of $\psi$ over $Q$, and the cubes we consider in the above supremum have  sides parallel to the coordinate axes. In particular, if we put $\alpha=0$, then $\mathcal{M}^\alpha$ reduces to the classical Hardy-Littlewood maximal operator.  

A closely related object, the so-called Riesz potential (fractional integral operator) $I_\alpha$, is given by the convolution
$$ I_\alpha \varphi(x)=\int_{\R^d} \frac{\varphi(y)}{|x-y|^{d-\alpha}}\mbox{d}y,\qquad 0<\alpha<d.$$
These fractional operators play an important role in analysis and PDEs, as they are a convenient tool to study differentiability properties of functions. Via the Fourier transform or heat semigroup they represent negative powers of the Laplacian. There is a huge literature on the boundedness of these operators in various function spaces and geometric settings. Such bounds often go under the name of the Hardy-Littlewood-Sobolev (HLS) inequality.  It has been of great interest to obtain optimal, or at least good bounds, for the corresponding norms. For a sample of this literature, see \cite{Christ,Esc, Frank1, Frank2,Gr,He,L,LMS,Lieb,St, Varo}.

In this paper, we will study the boundedness of fractional maximal functions on weighted $L^p$ spaces. In what follows, the  word ``weight'' refers to a locally integrable, positive function  
on $\R^d$, which will usually be denoted by $w$. With a slight abuse of notation, we will also write $w(A)=\int_A w\mbox{d}x$ for any measurable subset $A$ of $\R^d$. Given $p\in (0,\infty)$, the associated weighted $L^p$ space is given by
$$ L^p(w)=\left\{f:\R^d\to \R\,:\,\|f\|_{L^p(w)}:=\left(\int_{\R^d}|f|^pw\right)^{1/p}<\infty\right\}.$$
\def\notneeded{Now suppose that $p\in (1,\infty)$. We say that $w$ belongs to the Muckenhoupt class $A_p$ (or, in short, that $w$ is an $A_p$ weight), if the $A_p$ characteristics $[w]_{A_p}$, given by
$$ [w]_{A_p}:=\sup_Q \left(\frac{1}{|Q|}\int_Q w \right)\left(\frac{1}{|Q|}\int_Q w^{-1/(p-1)}\right)^{p-1},$$
is finite.  This condition arises naturally during the study of weighted $L^p$ inequalities for the Hardy-Littlewood maximal operator, see Muckenhoupt \cite{M}. To be more precise, for a given $1<p<\infty$, the inequality 
$$ \|\mathcal{M}^0\varphi\|_{L^p(w)}\leq C_{p,d,w}||\varphi||_{L^p(w)}$$
holds true with some constant $C_{p,d,w}$ depending only on the parameters indicated, if and only if $w$ is an $A_p$ weight. Here 
$ \|\varphi\|_{L^p(w)}=\left(\int_{\R^d} |\varphi|^pw\mbox{d}u\right)^{1/p}$ is the usual norm in the weighted space $L^p(w)$. This result was extended to the fractional setting by Muckenhoupt and Wheeden \cite{MW}. Let $p,\,q$ be positive exponents satisfying the relation $1/q=1/p-\alpha/d$. Then the inequality
$$ \left(\int_{\R^d} \big(\mathcal{M}^\alpha \varphi(x)\big)^qw(x)^q \mbox{d}x\right)^{1/q}\leq C_{p,\alpha,d,w}\left(\int_{\R^d} |\varphi(x)|^pw(x)^p\mbox{d}x\right)^{1/p}$$
holds true if and only if 
$$ \sup_Q \left(\frac{1}{|Q|}\int_Q w^q\right)\left(\int_Q w^{-p'}\right)^{q/p'}<\infty,$$
where $p'=p/(p-1)$ is the harmonic conjugate to $p$. In other words, we have
$$ \|\mathcal{M}^\alpha\|_{L^p(w^{p/q})\to L^q(w)}\leq C_{p,\alpha,d,w}<\infty$$
 if and only if $w\in A_{q/p'+1}.$ }
 
We will be mostly concerned with the two-weight context. Fix $\alpha\in [0,d)$ and two parameters $1<p\leq q<\infty$.  The problem is to characterize those  pairs $(u,v)$ of weights on $\R^d$ for which $\mathcal{M}^\alpha$ is bounded as an operator from $L^p(v)$ to $L^q(u)$. When $u=v$ and $p=q$, this reduces to the classical one-weight problem solved by Muckenhoupt \cite{M} in the seventies, where the characterization is expressed in terms of the so-called $A_p$ condition. The two-weight case was successfully handled by Sawyer \cite{Sa}. Namely, we have that $\|\mathcal{M}^\alpha\|_{L^p(v)\to L^q(u)}<\infty$ if and only if the following testing condition holds: there is a finite universal constant $C_{test}$ such that for all cubes $Q$,
\begin{equation}\label{testt}
 \left(\int_Q \big(\mathcal{M}^\alpha(v^{1-p'}\chi_Q)\big)^qu\mbox{d}x\right)^{1/q}\leq C_{test} \left(\int_Q v^{1-p'}\mbox{d}x\right)^{1/p}.
\end{equation}
Here $p'=p/(p-1)$ stands for the harmonic conjugate to $p$. 
There is an analogue of the above result in the dyadic case, or more generally, in the context of probability spaces equipped with a tree-like structure. Let us provide the description of this more general setup.

\begin{dfn}\label{tree}
Suppose that  $(X,\mu)$ is a probability space. 
A set $\mathcal{T}$ of measurable subsets of $X$ will be called a tree if the following conditions are satisfied:
\begin{itemize}
\item[(i)] $X\in \mathcal{T}$ and for every $Q\in\mathcal{T}$ we have $\mu(Q)>0$.

\smallskip

\item[(ii)] For every $Q \in \mathcal{T}$ there is a  finite subset $C(Q) \subset \mathcal{T}$ containing at least two elements such that

(a) the elements of $C(Q)$ are pairwise disjoint subsets of $Q$,

(b) $ Q = \bigcup C(Q)$.

\smallskip

\item[(iii)] $\mathcal{T} = \bigcup_{m\geq 0} \mathcal{T}^m$, where $\mathcal{T}^0=\{X\}$ and $\mathcal T^{m+1} = \bigcup_{Q\in\mathcal{T}^m} C(Q)$.
\end{itemize}
\end{dfn}

An important comment is in order. Typically, one assumes that the space $(X,\mu)$ is nonatomic and $\mathcal{T}$ enjoys the additional property

\smallskip

\begin{itemize}
\item[(iv)] We have $\lim_{m\to\infty}\sup_{Q\in\mathcal{T}^m} \mu(Q) = 0$.
\end{itemize}

\smallskip

\noindent This extra condition enables the use of differentiation theorems. However, we will work with the general context described by (i)-(iii) only, as the validity of (iv) may affect the two-weight estimates for some values of parameters. See the discussion following Theorem \ref{mainthm} below.

Any probability space equipped with a tree gives rise to the corresponding fractional maximal operator $\mathcal{M}^\alpha_\mathcal{T}$, acting on integrable functions $f:X\to \R$ by the formula
$$ \mathcal{M}^\alpha_\mathcal{T}f(x)=\sup\left\{\mu(Q)^{\alpha}\langle |f|\rangle_{Q}: x\in Q, \,Q \in\mathcal{T}\right\}.$$
Here $\langle \psi\rangle_{Q}=\frac{1}{\mu(Q)}\int_Q \psi\mbox{d}\mu$ stands for the average of $\psi$ over $Q$ with respect to the underlying probability measure $\mu$. 
Note that in contrast to the Euclidean case discussed above, there is no dimension here and hence the operator depends solely on $\alpha$. It is easy to check that the definition of $\mathcal{M}^\alpha_\mathcal{T}$ is meaningful in the range $0\leq \alpha<1$ only, and the case $\alpha=0$ corresponds to the standard (martingale) maximal function.

The concept of a weight and related notions carry over effortlessly: a (probabilistic) weight is a positive, integrable random variable $w$, we will use the same letter to denote the induced measure on $X$. Any weight $w$ gives rise to the associated weighted $L^p$ space, which will again be denoted by $L^p(w)$. 

Now, pick $(X,\mu)$ and $\mathcal{T}$, and let $\alpha\in [0,1)$, $1<p\leq q<\infty$ be fixed. In analogy to the previous context, one may ask about the characterization of those pairs $(u,v)$ of probabilistic weights, for which the maximal function $\mathcal{M}^\alpha_\mathcal{T}$ is bounded as an operator from $L^p(v)$ to $L^q(u)$. It follows from the work of Sawyer \cite{Sa} that the boundedness holds if and only if we have the testing condition
$$ \left(\int_Q \big(\mathcal{M}^\alpha_\mathcal{T}(v^{1-p'}\chi_Q)\big)^qu\mbox{d}\mu\right)^{1/q}\leq C_{test} \left(\int_Q v^{1-p'}\mbox{d}\mu\right)^{1/p}\quad \mbox{ for all }Q\in \mathcal{T},$$
for some universal $C_{test}<\infty$. 
One part of the implication is trivial: if we have the boundedness, then for any $Q\in \mathcal{T}$ we have
\begin{align*}
 \left(\int_Q \big(\mathcal{M}^\alpha_\mathcal{T}(v^{1-p'}\chi_Q)\big)^qu\mbox{d}\mu\right)^{1/q}\leq \|\mathcal{M}^\alpha_\mathcal{T} (v^{1-p'}\chi_Q)\|_{L^q(u)}\\
 \leq \|\mathcal{M}^\alpha_\mathcal{T}\|_{L^p(v)\to L^q(u)}\|v^{1-p'}\chi_Q\|_{L^p(v)},
 \end{align*}
which is the testing condition with $C_{test}= \|\mathcal{M}_\mathcal{T}^\alpha\|_{L^p(v)\to L^q(u)}.$ We will be interested in the reverse implication. For  given $1<p\leq q<\infty$, distinguish the constant
\begin{equation}\label{defC}
 C_{p,q}=\left(p-1\right)^{-1/q}\left[\frac{\Gamma(pq/(q-p))}{\Gamma(q/(q-p))\Gamma(p(q-1)/(q-p))}\right]^{1/p-1/q},
\end{equation}
where $\Gamma$ is the Euler Gamma function. 
If $p=q$, the above number is understood by passing to the limit: $C_{p,p}=p/(p-1)$. 

\begin{theorem}\label{mainthm}
Fix $1<p\leq q<\infty$, $0\leq \alpha<1$ and $L>0$. Suppose that $u$, $v$ are two weights on $(X,\mu)$ such that
\begin{equation}\label{testing}
 \left(\int_Q \mathcal{M}^\alpha_\mathcal{T}(\chi_Q v^{1-p'})^q u\mbox{d}\mu\right)^{1/q}\leq L\left(\int_Q v^{1-p'}\mbox{d}\mu\right)^{1/p}
\end{equation}
for any $Q\in \mathcal{T}$. Then we have the estimate
\begin{equation}\label{mainin}
 \|\mathcal{M}^\alpha_\mathcal{T}\|_{L^p(v)\to L^q(u)}\leq C_{p,q}L.
 \end{equation}
If $\alpha\geq 1/p-1/q$, then the constant $C_{p,q}$ is the best possible: for any $1<p\leq q<\infty$ and any $c<C_{p,q}L$ there is a triple $(X,\mu,\mathcal{T})$ and a pair $(u,v)$ of weights satisfying the testing condition \eqref{testing}, for which $ \|\mathcal{M}^\alpha_\mathcal{T}\|_{L^p(v)\to L^q(u)}>c.$
\end{theorem}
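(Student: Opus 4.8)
Set $\sigma=v^{1-p'}$; since $\sigma^{p-1}v\equiv 1$, writing a nonnegative $\varphi$ as $\varphi=f\sigma$ gives $\|\varphi\|_{L^p(v)}^p=\int_X f^p\,d\sigma$, so that \eqref{mainin} is equivalent to
$$\int_X \mathcal{M}^\alpha_\mathcal{T}(f\sigma)^q\,du\leq (C_{p,q}L)^q\Big(\int_X f^p\,d\sigma\Big)^{q/p}\qquad\text{for every }f\geq 0.$$
Writing $\mathcal{M}^{\alpha,Q}_\mathcal{T}$ for the maximal operator built from the cubes of $\mathcal{T}$ contained in $Q$, one checks that on $Q$ one has $\mathcal{M}^\alpha_\mathcal{T}(\chi_Q\sigma)=\mathcal{M}^{\alpha,Q}_\mathcal{T}(\chi_Q\sigma)$, so \eqref{testing} becomes $\int_Q\mathcal{M}^{\alpha,Q}_\mathcal{T}(\chi_Q\sigma)^q\,du\leq L^q\sigma(Q)^{q/p}$ for all $Q\in\mathcal{T}$; and for $x\in Q$ one has $\mathcal{M}^\alpha_\mathcal{T}(f\sigma)(x)=\max\big(m_Q,\mathcal{M}^{\alpha,Q}_\mathcal{T}(f\sigma)(x)\big)$, where the constant $m_Q$ records the contribution of the ancestors $R\supseteq Q$ and, on a child $Q'\in C(Q)$, $m_{Q'}=\max\big(m_Q,\mu(Q)^{\alpha-1}(f\sigma)(Q)\big)$. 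These are the variables on which an abstract Bellman function must depend.

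\textbf{The Bellman function.} I would introduce $\mathbb{B}=\mathbb{B}(\ell,m,s,\pi,\tau)$ as the supremum of $\int_Q\max\big(m,\mathcal{M}^{\alpha,Q}_\mathcal{T}(f\sigma)\big)^q\,du$ over all trees on a cube $Q$ with $\mu(Q)=\ell$, all weights $u,\sigma$ on $Q$, and all $f\geq 0$ satisfying the testing inequality on every subcube of $Q$, subject to $m\geq 0$, $\sigma(Q)=s$, $\int_Q f^p\,d\sigma=\pi$, $u(Q)=\tau$, with $\mu(Q)^{\alpha-1}(f\sigma)(Q)$ prescribed as well. Homogeneity in $u$, in $\sigma$ and in $f$ cuts down the number of effective variables; the properties this function must satisfy are: (i) the trivial bound $\mathbb{B}\geq m^q\tau$ coming from the constant competitor; (ii) a one-step concavity inequality $\mathbb{B}(\ell,m,s,\pi,\tau)\geq\sum_i\mathbb{B}\big(\ell_i,\max(m,\ell^{\alpha-1}(f\sigma)(Q)),s_i,\pi_i,\tau_i\big)$ whenever $Q$ splits into cubes $Q_i$ of measures $\ell_i$ with $\sum_i\ell_i=\ell$ and the remaining local data average correctly, the admissible range of $\ell^{\alpha-1}(f\sigma)(Q)$ being constrained by the testing inequality; and (iii) the root estimate $\mathbb{B}(\ell,0,s,\pi,\tau)\leq(C_{p,q}L)^q\pi^{q/p}$. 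Given such a function, \eqref{mainin} follows by iterating (ii) down the levels $\mathcal{T}^m$, summing, and letting $m\to\infty$, which uses only properties (i)--(iii) of Definition \ref{tree}.

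\textbf{Solving for $\mathbb{B}$ and the constant.} The crux is the explicit determination of $\mathbb{B}$. After using the scaling symmetries to pass to a single profile variable, the extremal problem defining $\mathbb{B}$ collapses to a one-dimensional variational problem whose Euler--Lagrange equation coincides with the one governing the extremizers in the sharp Sobolev imbedding (equivalently, the Bliss/Talenti radial problem). Solving that ODE in closed form yields the optimal profile, and the corresponding value of $\mathbb{B}$ is a Gamma-function ratio of exactly the Bliss--Talenti type, reproducing the factor $\big[\Gamma(pq/(q-p))/(\Gamma(q/(q-p))\Gamma(p(q-1)/(q-p)))\big]^{1/p-1/q}$ in \eqref{defC}; the prefactor $(p-1)^{-1/q}$ comes from normalizing against the testing constant, and letting $q\to p$ recovers $C_{p,p}=p/(p-1)$ in the limit. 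Setting up this variational problem correctly, recognizing it as the Sobolev one, and carrying out the integration is the step I expect to be the main obstacle; once the explicit $\mathbb{B}$ is in hand, verifying the concavity inequality (ii) for it is a delicate but essentially routine computation.

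\textbf{Sharpness.} For $\alpha\geq 1/p-1/q$ I would convert the extremal profile into a genuine example. On a homogeneous tree --- e.g.\ the $\kappa$-adic tree on $[0,1]$ for a suitable integer $\kappa$ --- take $u$, $\sigma=v^{1-p'}$ and $f$ to be the self-similar, power-type data dictated by the ODE solution, truncated at level $N$, and let $N\to\infty$; one then checks directly that \eqref{testing} holds with constant arbitrarily close to $L$ while $\|\mathcal{M}^\alpha_\mathcal{T}(f\sigma)\|_{L^q(u)}/\|f\|_{L^p(\sigma)}\to C_{p,q}L$. The hypothesis $\alpha\geq 1/p-1/q$ enters precisely here: it is exactly the condition making the geometric series generated by the self-similar construction decay at the rate for which the maximal function is not dominated by the top cubes. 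For $\alpha<1/p-1/q$ the construction fails to saturate --- the extremal problem, and with it the best constant, changes --- which is why sharpness is claimed only in that range and why property (iv) of Definition \ref{tree} affects the overall picture.
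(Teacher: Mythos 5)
Your proposal is a plan rather than a proof, and the plan founders exactly where the paper's strategy is designed to route around. You set up a five-variable Bellman function $\mathbb{B}(\ell,m,s,\pi,\tau)$ (with the running maximum $m$ as a state variable) defined directly as the extremal quantity for the two-weight problem, and then assert that it "collapses to a one-dimensional variational problem," that the resulting ODE can be solved in closed form, and that verifying the concavity inequality (ii) for the explicit formula is "essentially routine." None of this is carried out, and it is precisely the step the authors identify as intractable: the explicit Bellman function for this problem is extremely complicated and involves implicitly defined auxiliary parameters, so the verify-by-hand route is not available. The paper avoids it entirely. It first \emph{linearizes} the maximal operator via the disjoint sets $E(Q)=\{\omega:\tilde Q(\omega)=Q\}$, which converts \eqref{wewant} into a sharp $\sigma$-weighted Carleson embedding $\bigl(\sum_Q a_Q\langle f\rangle_{Q,\sigma}^q\bigr)^{1/q}\leq C_{p,q}\bigl(\int f^p\,d\sigma\bigr)^{1/p}$ with $a_Q=(\mu(Q)^\alpha\langle\sigma\rangle_Q)^q u(E(Q))$, and turns the testing hypothesis into exactly the Carleson condition on $(a_Q)$; this removes the max variable $m$ from the problem altogether. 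It then proves the embedding with an \emph{abstract} four-variable function $\mathcal{B}$ defined as a supremum of Bliss-type integrals: the upper bound 2$^\circ$ is imported wholesale from Bliss' inequality (the radial sharp Sobolev inequality), and the concavity 3$^\circ$ is proved from the definition by decreasing rearrangement and concatenation, with no explicit formula ever appearing. Your proposal inverts this logic — you want Bliss to come \emph{out} of an explicit computation, whereas the paper puts Bliss \emph{in} as a black box — and without the linearization and the rearrangement arguments the gap is genuine.

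The sharpness sketch has a similar problem. The paper's example is power weights $u(\omega)\sim\omega^{q(1-\alpha)/p-1}$, $\sigma(\omega)\sim\omega^{-\alpha}$ on a specific (non-homogeneous) tree on $[0,1]$, and the condition $\alpha\geq 1/p-1/q$ is used to \emph{verify the testing condition} for these weights: the verification reduces to the inequality $\frac{1}{x-1}\leq\frac{1-1/p}{x^{1-\alpha}-1}+\frac{1/p}{x^{q(1-\alpha)/p}-1}$, which at the endpoint $\alpha=1/p-1/q$ is Jensen's inequality for the convex function $s\mapsto(x^{1/s}-1)^{-1}$ and which improves as $\alpha$ increases. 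Your explanation — that $\alpha\geq 1/p-1/q$ governs the decay rate of a geometric series in a self-similar construction — is not the mechanism, and you do not address the testing-condition verification at all, which is the only nontrivial part of the example.
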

We may and do assume that the testing constant $L$ appearing in \eqref{testing} is equal to $1$: this does not affect the generality of the result, since the constant can always be absorbed into the weight $u$. 

Unfortunately, we have been unable to provide the sharp result for all $\alpha$: while the estimate \eqref{mainin} holds true in the full range, it is not clear to us whether $C_{p,q}$ is the best in the case $\alpha<1/p-1/q$. However, we would like to point out that the range $\alpha\in [1/p-1/q,1)$, in which we provide the full answer, is a natural range of parameters, at least for ``friendly'' probability spaces. Namely, suppose that $\alpha<1/p-1/q$ and assume that the probability space $(X,\mu)$ is nonatomic and satisfies the differentiability property (iv) discussed above. Then there is no chance for the $L^p(v)\to L^q(u)$ estimate, unless the weight $u$ vanishes almost everywhere. Indeed, setting $f=\chi_Q$ for some $Q\in \mathcal{T}$, we see that
$$ \|\mathcal{M}^\alpha_\mathcal{T}\|_{L^p(v)\to L^q(u)}\geq \frac{\|\mathcal{M}^\alpha_\mathcal{T} f\|_{L^q(u)}}{\|f\|_{L^p(v)}}\geq \frac{\mu(Q)^{\alpha}(u(Q))^{1/q}}{(v(Q))^{1/p}}=\mu(Q)^{\alpha+1/q-1/p}\cdot \frac{\langle u\rangle_Q^{1/q}}{\langle v\rangle_Q^{1/p}}.$$
However, by Lebesgue's differentiation theorem (or rather Doob's martingale convergence theorem), for almost all $\omega\in X$, the averages $\langle u\rangle_Q$ and $\langle v\rangle_Q$ tend to $u(\omega)$ and $v(\omega)$ as $Q$ shrinks to $\omega$. Hence if $u>0$ with positive probability, then the expression on the right above explodes for an appropriate sequence of $Q$'s with $\mu(Q)\to 0$, and hence $\mathcal{M}^\alpha_\mathcal{T}$ does not map $L^p(v)$ to $L^q(u)$ boundedly. 

 Our approach will rest on the so-called Bellman function technique, a powerful method used widely in probability and harmonic analysis to obtain tight estimates. The technique originates from the theory of optimal stochastic control developed by Bellman \cite{Be}, and it has been studied intensively during the last thirty years. Its connection to the problems of martingale theory was firstly observed by Burkholder \cite{Bur1}, who used it to identify the unconditional constant of the Haar system and related estimates for martingale transforms (some echoes of this approach can be found in his earlier paper \cite{B0} on geometric characterization of UMD spaces). This direction of research was further explored intensively by Burkholder, his PhD students and other mathematicians (see \cite{Ose} for the overview). In the nineties, Nazarov, Treil and Volberg (cf. \cite{NT}, \cite{NTV}) described the method from a wider perspective which allowed them to apply it to various problems of harmonic analysis. Since then, the technique has proven to be extremely efficient in various contexts.  Consult for example  \cite{BO,Os,P,SV,SV2,V,VV}, and the many references therein.

Roughly speaking, the technique states that the validity of a given estimate is equivalent to the existence of a certain special function, enjoying appropriate size and concavity requirements; the shape of these conditions depend on the context and the form of the inequality under investigation. In a typical situation, the identification of the appropriate Bellman function might be a very difficult task. One searches for the function with experimentation, often using arguments coming from various areas of mathematics  such as PDEs, differential geometry, probability theory and calculus of variation. In general, the candidate obtained is extremely complicated, especially in the case when one aims at the best constant (often the function involves three or more variables), and the verification of the relevant properties is very elaborate. Our approach in this paper will be completely different, and it will allow us to avoid most of the technical problems. Our starting point is the sharp version of the classical Sobolev imbedding theorem, established by Talenti \cite{T} in the seventies, or rather its variant for radial functions, due to Bliss \cite{Bl}. Using the other implication of the Bellman function method, the validity of this inequality implies the existence of the corresponding special function. We then show that this abstract, non-explicit function, enjoys all the properties and necessary conditions to yield \eqref{mainin}. This unexpected relation between the sharp Sobolev imbedding theorem and sharp two-weight inequalities for fractional maximal operators can be regarded as an independent contribution of this work. This gives rise to the very interesting and challenging general problem of formulating similar connections in other geometric settings. 

The remaining part of the paper is organized as follows. In the next section we show how the Sobolev imbedding theorem leads to a certain special function of four variables, and we study some additional properties of this object. In the final part of the paper we exploit this function to deduce the desired estimate \eqref{mainin}; furthermore, we construct there examples showing that the constant $C_{p,q}$ is the best possible for $\alpha\geq 1/p-1/q$.

\section{Sobolev imbedding theorem, Bliss' inequality and a special function}

Suppose that $d$ is a fixed dimension, $p$ is an exponent lying in the interval $(1,d)$ and let $q=dp/(d-p)$. The classical Sobolev inequality asserts the existence of the constant $K=K_{p,d}$ depending only on the parameters indicated such that
\begin{equation}\label{sobolev}
 \|u\|_{L^q(\R^d)}\leq K_{p,d} \|\,|\nabla u|\,\|_{L^p(\R^d)}
\end{equation}
for any smooth compactly supported function $u$ on $\R^d$. In  \cite{T} Talenti identified the best value of the constant $K_{p,d}$ to be 
$$ \frac{1}{\pi^{1/2}d^{1/p}} \left(\frac{p-1}{d-p}\right)^{1-1/p}\left(\frac{\Gamma(1+d/2)\Gamma(d)}{\Gamma(d/p)\Gamma(1+d-d/p)}\right) ^{1/d}.$$
Specifically, using symmetrization Talenti proved that it is enough to establish the inequality for radial functions: $u(x)=F(|x|)$ for some smooth compactly supported function $F:[0,\infty)\to \R$. If we write $ F(t)=-\int_t^\infty F'(s)\mbox{d}s$ and substitute $f=-F'$, 
then $\nabla u(x)=-f(|x|)x/|x|$ and \eqref{sobolev} becomes
\begin{equation}\label{talenti}
\begin{split}
 &\left(\int_0^\infty \left|\int_t^\infty f(s)\mbox{d}s\right|^qt^{d-1}\mbox{d}t\right)^{1/q}\\
&\leq K_{p,d}\left(\frac{d\pi^{d/2}}{\Gamma(1+d/2)}\right)^{1/p-1/q}\left(\int_0^\infty |f(t)|^pt^{d-1}\mbox{d}t\right)^{1/p}.
\end{split}
\end{equation}
Talenti showed this estimate using techniques from the calculus of variations. However, it is worth mentioning  that the bound had already appeared in the literature. If we set $\alpha=q(p-1)/(pd)$ and make the substitution $t=u^{-\alpha}$ under both integrals, and then put $\varphi(u)=f(u^{\alpha})u^{-\alpha-1}$, then \eqref{talenti} transforms into
\begin{equation}\label{bliss}
 \left(\int_0^\infty u^{q/p-1}\left|\frac{1}{u}\int_0^u \varphi\right|^q\mbox{d}u\right)^{1/q}\leq \left(\frac{p}{q}\right)^{1/q}C_{p,q}\left(\int_0^\infty |\varphi|^p\right)^{1/p},
\end{equation}
where $C_{p,q}$ is given by \eqref{defC}. 
This is a classical inequality due to Bliss \cite{Bl}, who proved it in 1930 also with calculus of variations. There is an alternative proof of \eqref{bliss}, invented in \cite{Os}, which makes use of Bellman functions. Let us briefly discuss this approach, as some elements will be useful to us later. 
The estimate \eqref{bliss} is equivalent to proving that 
$$ \sup\left\{\int_0^s u^{q/p-1}\left(\frac{1}{u}\int_0^u \varphi\right)^q\mbox{d}u\right\}=\frac{p}{q}C_{p,q}^q,$$
where the supremum is taken over all $s$ and all nonnegative measurable functions $\varphi$ on $(0,s)$ satisfying $\int_0^s \varphi^p=1$. We extend this problem to the following: given $s>0$ and $x,\,y\geq 0$ satisfying $x^p\leq y$, consider the quantity
$$ b(x,y,s)=\sup\left\{\int_0^s u^{q/p-1}\left(\frac{1}{u}\int_0^u \varphi\right)^q\mbox{d}u\right\},$$
where this time the supremum is taken over all $\varphi:(0,s)\to [0,\infty)$ satisfying $\frac{1}{s}\int_0^s \varphi=x$ and $\frac{1}{s}\int_0^s \varphi^p=y.$ Note that the assumption $x^p\leq y$ guarantees the existence of a function $\varphi$ satisfying these integral properties, and hence the definition of $b(x,y,s)$ makes sense. At a first glance this might be confusing: we have increased the difficulty of the problem, adding some extra bounds on the function $\varphi$. However, it turns out that the introduction of the additional variables enables the explicit derivation of the function $b$, with the help of optimal control theory. More specifically, this function can be shown to satisfy a certain partial differential equation (the associated Hamilton-Jacobi-Bellman equation), together with some homogeneity-type properties, and the computation can be carried out. Then one checks that $b(x,y,s)\leq \frac{p}{q}C_{p,q}^q(ys)^{q/p}$, which yields the estimate \eqref{bliss}. See \cite{Os} for details.

The successful treatment of our main result requires the further complication of the Bellman function $b$ introduced above. Let us distinguish the four-dimensional domain
$$ \mathcal{D}=\Big\{(x,y,s,t)\in \R_+^4\,:\,x^p\leq y,\,s\geq t^{p/q}\Big\}.$$
Let $\mathcal{B}:\mathcal{D}\to \R$ be given by
\begin{equation}\label{defB}
 \mathcal{B}(x,y,s,t)=\sup\left\{\int_0^{t^{p/q}} u^{q/p-1}\left(\frac{1}{u}\int_0^u \varphi\right)^q\mbox{d}u\right\},
\end{equation}
where the supremum is taken over all measurable functions $\varphi:[0,s]\to [0,\infty)$, satisfying
$$ \frac{1}{s}\int_0^s \varphi=x\qquad \mbox{ and }\qquad \frac{1}{s}\int_0^s \varphi^p=y.$$
Thanks to the condition $s\geq t^{p/q}$, the integrals under the supremum are well-defined and hence the definition of $\mathcal{B}$ makes sense. Note that $\mathcal{B}$ is indeed a complication of $b$. That is, we have the identity $b(x,y,s)=\mathcal{B}(x,y,s,s^{q/p})$, and the extra variable $t$ controls the length of the interval of integration. 

The function $\mathcal{B}$, or more precisely certain pointwise estimates satisfied by it, will be of key importance for the proof of \eqref{mainin}. Here is the main result of this section.

\begin{theorem}\label{PropB}
The function $\mathcal{B}$ enjoys the following three properties.

\smallskip

1$^\circ$ $\mathcal{B}\geq 0$.

2$^\circ$ For any $(x,y,s,t)\in \mathcal{D}$, $\mathcal{B}(x,y,s,t)\leq \frac{p}{q}C_{p,q}^q (sy)^{q/p}$.

3$^\circ$ For any $m=2,\,3,\,\ldots$, the estimate
$$\mathcal{B}(x,y,s,t)\geq \frac{p}{q}x^qu+\sum_{i=1}^m \mathcal{B}(x_i,y_i,s_i,t_i)$$
holds provided $(x_i,y_i,s_i,t_i)\in \mathcal{D}$, $i=1,\,2,\,\ldots,\,m$, satisfy
\begin{equation}\label{conditions}
 \sum_{i=1}^m s_ix_i=sx,\quad \sum_{i=1}^m s_iy_i=sy,\qquad u+\sum_{i=1}^m t_i=t\quad\mbox{ and }\quad  \sum_{i=1}^m s_i=s.
\end{equation}
\end{theorem}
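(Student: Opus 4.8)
The three properties I want to establish are best attacked directly from the variational definition \eqref{defB}, without any recourse to explicit formulas. Property $1^\circ$ is immediate: the constant function $\varphi\equiv x$ is a legitimate competitor in the supremum defining $\mathcal B(x,y,s,t)$ only when $y=x^p$, but for general $y\geq x^p$ there is always \emph{some} nonnegative $\varphi$ with $\frac1s\int_0^s\varphi=x$ and $\frac1s\int_0^s\varphi^p=y$ (e.g.\ a two-value step function), and the integrand $u^{q/p-1}\big(\frac1u\int_0^u\varphi\big)^q$ is nonnegative, so $\mathcal B\geq 0$.

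Property $2^\circ$ is where Bliss' inequality enters. The plan is to recall from the discussion preceding the theorem that $b(x,y,s)=\mathcal B(x,y,s,s^{q/p})$ and that the argument of \cite{Os} gives the bound $b(x,y,s)\leq \frac{p}{q}C_{p,q}^q(ys)^{q/p}$. Since $(x,y,s,t)\in\mathcal D$ forces $t^{p/q}\leq s$, the interval of integration $[0,t^{p/q}]$ in \eqref{defB} is contained in $[0,s]$, and the integrand is nonnegative, so $\mathcal B(x,y,s,t)\leq \mathcal B(x,y,s,s^{q/p})=b(x,y,s)\leq \frac{p}{q}C_{p,q}^q(sy)^{q/p}$. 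This is the one place I lean on an external input, but it is quoted verbatim from the paper's own setup.

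Property $3^\circ$ is the real content — a superadditivity/concavity inequality that will later encode one splitting step of the tree. The strategy is a gluing construction. Given competitors: first, the assumptions force $u\geq 0$, since $u=t-\sum_i t_i$ and $t\geq\sum t_i^{p/q}\cdot(\ldots)$; more carefully, $s\geq t^{p/q}$ and $s=\sum s_i$, $s_i\geq t_i^{p/q}$ give $t^{p/q}\leq s=\sum s_i$, hence (after checking the elementary inequality $\sum t_i^{p/q}\le(\sum t_i)^{p/q}$ when $p/q\ge 1$... actually $p\le q$ so $p/q\le 1$ and one needs $\sum t_i^{p/q}\ge(\sum t_i)^{p/q}$, giving $t^{p/q}\le s$ consistent with $u\ge0$ — I will verify the precise chain) one gets $u\ge 0$. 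For each $i$, pick $\varphi_i:[0,s_i]\to[0,\infty)$ with $\frac1{s_i}\int_0^{s_i}\varphi_i=x_i$, $\frac1{s_i}\int_0^{s_i}\varphi_i^p=y_i$ nearly optimal for $\mathcal B(x_i,y_i,s_i,t_i)$. Now build $\varphi$ on $[0,s]$ by placing a constant piece equal to $x$ on $[0,u]$ and then laying down rescaled copies of the $\varphi_i$ on the successive subintervals of length $s_i$ covering $[u,u+s_1],\,[u+s_1,u+s_1+s_2],\ldots$. The constraints $\sum s_ix_i=sx$ and $\sum s_iy_i=sy$ are exactly what make $\frac1s\int_0^s\varphi=x$ and $\frac1s\int_0^s\varphi^p=y$ (the contribution of the length-$u$ flat piece is $ux$ and $ux^p$ respectively; here the requirement $x^p\le y$ together with $\sum s_iy_i=sy$ is what keeps things consistent). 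Then split the integral $\int_0^{t^{p/q}}u^{q/p-1}\big(\frac1u\int_0^u\varphi\big)^q\,du$ — note $\int_0^u\varphi$ here is the running integral of the \emph{constructed} $\varphi$, not the number $u$ — at the points $u,\,u+t_1^{p/q},\,u+t_1^{p/q}+t_2^{p/q},\ldots$; on $[0,u]$ the running average is exactly $x$ giving $\int_0^u w^{q/p-1}x^q\,dw=\frac{p}{q}x^q u^{q/p}$ — wait, I need $\frac{p}{q}x^q u$, so the bookkeeping must instead put the flat piece at the \emph{end} or, more likely, the flat piece of length $u$ sits at the start of the $t$-integration window and $u^{q/p}$ should read $u$; I will arrange the construction (flat piece on $[0,u]$ in the $t$-variable corresponds to a flat piece of $\varphi$ and the integral $\int_0^u w^{q/p-1}x^q dw$ — this gives $\frac{p}{q}x^q u^{q/p}$, not matching, so the correct reading is that the statement's first term comes from a piece where the running average is $x$ over a $u$-interval in a rescaled variable; I will make the substitution explicit so the term is exactly $\frac{p}{q}x^q u$). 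On each subsequent block, a change of variables $w=u+t_1^{p/q}+\cdots$ plus the scaling identity for the running average of the glued copy reproduces $\mathcal B(x_i,y_i,s_i,t_i)$ up to the $\varepsilon$ slack, using $\sum_i t_i^{p/q}+u\le(\sum_i t_i+u)\vphantom{t}^{p/q}\cdot$ — again I must check $\sum(\cdot)^{p/q}$ vs $(\sum\cdot)^{p/q}$; since $p/q\le1$ I have $\sum t_i^{p/q}\ge(\sum t_i)^{p/q}$, which goes the \emph{wrong} way, so the flat piece actually helps absorb the surplus length and the window $[0,t^{p/q}]$ comfortably contains $[0,u+\sum t_i^{p/q}]$ precisely when... this is the delicate point.

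Accordingly, the main obstacle I anticipate is the \emph{length bookkeeping in $3^\circ$}: reconciling the $t$-variable (which combines additively, $u+\sum t_i=t$) with the integration variable (where the interval length is $t^{p/q}$ and sub-blocks have lengths $t_i^{p/q}$), since $p/q\le 1$ makes $\sum t_i^{p/q}\ge (\sum t_i)^{p/q}$ — the inequality pointing the ``inconvenient'' way. The resolution should be that the glued competitor $\varphi$ does not need to have its pieces occupy exactly $[0,t^{p/q}]$; one only needs each block of the running-integral computation to land inside the admissible window, and one discards (by nonnegativity) any leftover, or more precisely the value $\mathcal B(x_i,y_i,s_i,t_i)$ with $s_i\ge t_i^{p/q}$ gives room to reparametrize. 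I expect the clean way to write this is: extend each near-optimal $\varphi_i$ by zero to all of $[0,\infty)$, concatenate (with the flat $x$-block first), and observe that because every integrand is $\ge0$, restricting the outer integral to $[0,t^{p/q}]$ only decreases it, while the lower bound we want is assembled from genuine sub-integrals that sit inside $[0,t^{p/q}]$ thanks to $u+\sum t_i=t$ handled in the $t$-variable before raising to the $p/q$ power. Once that reparametrization is set up correctly, properties $1^\circ$ and $2^\circ$ are short, and $3^\circ$ follows by letting the $\varepsilon$'s go to zero.
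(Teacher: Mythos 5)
Your treatment of $1^\circ$ and $2^\circ$ is fine and matches the paper's (the monotonicity of the window in $t$ plus Bliss/the bound on $b$ is exactly how $2^\circ$ is obtained). The problem is $3^\circ$, where your gluing construction does not survive its own bookkeeping. First, the competitor you build is inadmissible: $\varphi$ must be defined on $[0,s]$ with $s=\sum s_i$, so there is no room in the $s$-variable for an extra flat block of length $u$ (the variable $u$ lives in the $t$-budget, $u+\sum t_i=t$, not in the $s$-budget), and even ignoring the domain length, prepending a flat piece equal to $x$ destroys the constraint $\frac1s\int\varphi^p=y$ unless $y=x^p$. Second, as you yourself notice, the flat block produces $\frac pq x^q u^{q/p}$ rather than $\frac pq x^q u$; the correct source of the term $\frac pq x^q u$ is entirely different: one takes the \emph{decreasing rearrangement} $\tilde\varphi$ of a competitor for $\mathcal B(x,y,s,t-u)$, observes that its running average $\frac1w\int_0^w\tilde\varphi$ is always $\geq x$, and harvests the tail integral $\int_{(t-u)^{p/q}}^{t^{p/q}}w^{q/p-1}x^q\,dw=\frac pq x^q u$. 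No extra block of $\varphi$ is inserted at all.

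Third, and most seriously, the ``delicate point'' you flag — that $p/q\le1$ gives $\sum t_i^{p/q}\geq\bigl(\sum t_i\bigr)^{p/q}$, so the sub-blocks do not fit inside the window $[0,t^{p/q}]$ — is the actual mathematical content of $3^\circ$, and your proposal never resolves it; ``one discards any leftover by nonnegativity'' goes in the wrong direction, since you need a \emph{lower} bound and the missing mass is exactly what you cannot discard. The paper's resolution is a two-step reduction ($3^\circ=3^{\circ\prime}+3^{\circ\prime\prime}$, the latter iterated by induction over pairwise merges, which requires checking that merged points stay in $\mathcal D$ via Jensen and $p\le q$). For the two-point superadditivity $3^{\circ\prime\prime}$ one concatenates $\varphi_1,\varphi_2$, takes the decreasing rearrangement $\tilde\varphi$, rewrites the functional in the additive $t$-variable as $\frac pq\int_0^t\bigl(\frac{1}{u^{p/q}}\int_0^{u^{p/q}}\tilde\varphi\bigr)^q\,du$, and then proves that the deficit function $\Psi(w_1,w_2)$ is positive by showing it is positive on the axes and that at every interior point at least one partial derivative is positive — which in turn rests on the inequality that the running average of $\tilde\varphi$ at ``time'' $(w_1+w_2)^{p/q}$ dominates the minimum of the running averages of $\varphi_1$ at $w_1^{p/q}$ and $\varphi_2$ at $w_2^{p/q}$ (here subadditivity $(w_1+w_2)^{p/q}\le w_1^{p/q}+w_2^{p/q}$ is used in the \emph{favorable} direction, because a shorter window has a larger running average for a decreasing function). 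None of this rearrangement-plus-monotone-deformation machinery appears in your sketch, so the proposal as written has a genuine gap precisely at the step you identified as delicate.
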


It is natural to try to prove the above statements as follows: first compute $\mathcal{B}$ explicitly, and then verify the above conditions ``by hand''. The problem with this approach is that the formula for $\mathcal{B}$ is extremely complicated and involves some auxiliary parameters given implicitly (this is also true for $b$, see \cite{Os}). Fortunately, we will be able to prove Theorem \ref{PropB} in a different manner, based only on the abstract definition of $\mathcal{B}$ and thus avoiding most of the technically complicated  issues.

Actually, the main difficulty lies in showing the condition 3$^\circ$. Indeed, by a direct application of \eqref{defB}, we see that the property 1$^\circ$ is trivial, and 2$^\circ$ follows at once from Bliss' inequality \eqref{bliss}. The proof of the third condition follows from the next proposition which is proved in the two lemmas that follow. 

\begin{proposition}
To show 3$^\circ$, it is enough to prove that $\mathcal{B}$ enjoys the following two properties. 

\smallskip
3$^\circ$' If $(x,y,s,t)\in \mathcal{D}$ and $u\in [0,t]$, then
$$ \mathcal{B}(x,y,s,t)\geq \frac{p}{q}x^qu+\mathcal{B}(x,y,s,t-u).$$

\smallskip

3$^\circ$'' We have
$$ \mathcal{B}(x,y,s,t)\geq \mathcal{B}(x_1,y_1,s_1,t_1)+\mathcal{B}(x_2,y_2,s_2,t_2),$$
provided the points $(x,y,s,t)$, $(x_1,y_1,s_1,t_1)$, $(x_2,y_2,s_2,t_2)\in \mathcal{D}$ satisfy
$$ s_1x_1+s_2x_2=sx,\qquad s_1y_1+s_2y_2=sy,\qquad t_1+t_2=t\qquad \mbox{and}\qquad s_1+s_2=s.$$ 
\end{proposition}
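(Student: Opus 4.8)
The plan is to derive the general property $3^\circ$ by iterating the two special cases $3^\circ{}'$ and $3^\circ{}''$. The key observation is that $3^\circ$ allows an arbitrary decomposition of $t$ into a ``loss'' term $u$ and $m$ pieces $t_1,\dots,t_m$, while simultaneously splitting the mass data $(sx,sy,s)$ into $m$ pieces that reassemble correctly; whereas $3^\circ{}'$ handles only the extraction of a single $u$-loss (leaving all the mass data untouched), and $3^\circ{}''$ handles only a binary split of all four coordinates with no loss. First I would use $3^\circ{}'$ once to write $\mathcal{B}(x,y,s,t)\geq \frac{p}{q}x^q u+\mathcal{B}(x,y,s,t-u)$, which disposes of the term $\frac{p}{q}x^qu$ and reduces matters to proving the lossless inequality $\mathcal{B}(x,y,s,t-u)\geq \sum_{i=1}^m \mathcal{B}(x_i,y_i,s_i,t_i)$ under the remaining constraints $\sum s_ix_i=sx$, $\sum s_iy_i=sy$, $\sum t_i=t-u$, $\sum s_i=s$. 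Note one must check that $(x,y,s,t-u)\in\mathcal{D}$, which holds since $x^p\le y$ is unchanged and $s\ge t^{p/q}\ge (t-u)^{p/q}$.

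Next I would prove the lossless statement by induction on $m\geq 1$ (the case $m=1$ being trivial, as the constraints then force $(x_1,y_1,s_1,t_1)=(x,y,s,t-u)$). For the inductive step from $m-1$ to $m$, set $\sigma=\sum_{i=1}^{m-1}s_i=s-s_m$, and define averaged intermediate data $\tilde x=\frac{1}{\sigma}\sum_{i=1}^{m-1}s_ix_i$, $\tilde y=\frac{1}{\sigma}\sum_{i=1}^{m-1}s_iy_i$, $\tilde t=\sum_{i=1}^{m-1}t_i$. One checks $(\tilde x,\tilde y,\sigma,\tilde t)\in\mathcal{D}$: the bound $\tilde x^p\le\tilde y$ follows from Jensen's inequality (convexity of $z\mapsto z^p$, using $x_i^p\le y_i$ and $\sum s_i/\sigma=1$), and $\sigma\ge\tilde t^{p/q}$ follows from $q/p\ge 1$ via the superadditivity-type inequality $\sum s_i\ge\sum t_i^{p/q}\ge(\sum t_i)^{p/q}$, or more carefully from $s_i\ge t_i^{p/q}$ together with concavity of $z\mapsto z^{p/q}$. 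Then the data $(\tilde x,\tilde y,\sigma,\tilde t)$ and $(x_m,y_m,s_m,t_m)$ satisfy exactly the hypotheses of $3^\circ{}''$ relative to $(x,y,s,t-u)$, so $\mathcal{B}(x,y,s,t-u)\geq\mathcal{B}(\tilde x,\tilde y,\sigma,\tilde t)+\mathcal{B}(x_m,y_m,s_m,t_m)$; and the inductive hypothesis applied to $(\tilde x,\tilde y,\sigma,\tilde t)$ with the $m-1$ pieces $(x_i,y_i,s_i,t_i)$ gives $\mathcal{B}(\tilde x,\tilde y,\sigma,\tilde t)\ge\sum_{i=1}^{m-1}\mathcal{B}(x_i,y_i,s_i,t_i)$. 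Combining yields the claim.

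The step I expect to be the main obstacle is the verification that the averaged intermediate point $(\tilde x,\tilde y,\sigma,\tilde t)$ genuinely lies in $\mathcal{D}$, and in particular the inequality $\sigma\geq\tilde t^{p/q}$. This is where the exponent condition $p\le q$ is used in an essential way: one needs $z\mapsto z^{p/q}$ to be concave (equivalently subadditive), so that $\tilde t^{p/q}=\bigl(\sum_{i<m}t_i\bigr)^{p/q}\le\sum_{i<m}t_i^{p/q}\le\sum_{i<m}s_i=\sigma$. Once both membership conditions are in place, the rest is a clean bookkeeping argument chaining $3^\circ{}'$ once and then $3^\circ{}''$ repeatedly; no explicit knowledge of $\mathcal{B}$ is needed, only its abstract superadditivity-type behaviour encoded in $3^\circ{}'$ and $3^\circ{}''$.
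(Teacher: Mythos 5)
Your proposal is correct and follows essentially the same route as the paper: one application of 3$^\circ$' to peel off the term $\frac{p}{q}x^qu$, followed by an induction on $m$ using 3$^\circ$'', where the only substantive point is that the merged/averaged intermediate datum lies in $\mathcal{D}$ — verified, exactly as in the paper, via Jensen's inequality for $z\mapsto z^p$ and the subadditivity of $z\mapsto z^{p/q}$ coming from $p\le q$. The only cosmetic difference is that you merge the first $m-1$ leaves and split off the last, while the paper merges two leaves to reduce $m$ to $m-1$; these are the same induction.
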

\begin{proof} Assume 3$^\circ$' and 3$^\circ$''. 
Pick arbitrary $(x,y,s,t),\,(x_i,y_i,s_i,t_i)$ and $u$ as in the formulation of 3$^\circ$ and let  $t'=\sum t_i=t-u$. The application of 3$^\circ$' yields
\begin{equation}\label{first_reduction}
 \mathcal{B}(x,y,s,t)\geq \frac{p}{q}x^qu+\mathcal{B}(x,y,s,t').
\end{equation}
 However, by induction on $m$, 3$^\circ$'' gives
\begin{equation}\label{induction}
 \mathcal{B}(x,y,s,t')\geq \sum_{i=1}^m \mathcal{B}(x_i,y_i,s_i,t_i).
\end{equation}
Indeed, if $(x_{m-1},y_{m-1},s_{m-1},t_{m-1})$ and $(x_m,y_m,s_m,t_m)$ belong to $\mathcal{D}$, then so does 
$$ \left(\frac{s_{m-1}x_{m-1}+s_mx_m}{s_{m-1}+s_m},\frac{s_{m-1}y_{m-1}+s_my_m}{s_{m-1}+s_m},s_{m-1}+s_m,t_{m-1}+t_m\right),$$
since 
$$ \frac{s_{m-1}y_{m-1}+s_my_m}{s_{m-1}+s_m}\geq \frac{s_{m-1}x_{m-1}^p+s_mx_m^p}{s_{m-1}+s_m}\geq \left(\frac{s_{m-1}x_{m-1}+s_mx_m}{s_{m-1}+s_m}\right)^p,$$
by Jensen's inequality, and
$$ s_{m-1}+s_m\geq t_{m-1}^{p/q}+t_m^{p/q}\geq (t_{m-1}+t_m)^{p/q},$$
because $p\leq q$. This permits the use of induction step and \eqref{induction} follows. Combining it with \eqref{first_reduction} yields  3$^\circ$.
\end{proof}

Now we turn our attention to the conditions 3$^\circ$' and 3$^\circ$''.

\begin{lemma}
The function $\mathcal{B}$ enjoys the condition 3$^\circ$'.
\end{lemma}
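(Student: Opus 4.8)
The plan is to prove $3^\circ{}'$ directly from the defining formula \eqref{defB}, by relating near-optimizers for $\mathcal{B}(x,y,s,t-u)$ to competitors for $\mathcal{B}(x,y,s,t)$ via a concatenation trick that exploits the scaling structure of the weight $u^{q/p-1}$ in the integral. Fix $(x,y,s,t)\in\mathcal{D}$ and $u\in[0,t]$, and write $a=t^{p/q}$, $a'=(t-u)^{p/q}$, so $0\le a'\le a\le s$. Let $\varphi:[0,s]\to[0,\infty)$ be any competitor for $\mathcal{B}(x,y,s,t-u)$, i.e.\ $\tfrac{1}{s}\int_0^s\varphi=x$, $\tfrac{1}{s}\int_0^s\varphi^p=y$. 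The idea is to use the \emph{same} $\varphi$ as a competitor for $\mathcal{B}(x,y,s,t)$ (its integral constraints are unchanged), and compare the two integrals
\[
 \int_0^{a} u^{q/p-1}\Big(\tfrac1u\int_0^u\varphi\Big)^q\,\mathrm du\qquad\text{versus}\qquad \int_0^{a'} u^{q/p-1}\Big(\tfrac1u\int_0^u\varphi\Big)^q\,\mathrm du.
\]
Their difference is $\int_{a'}^{a} u^{q/p-1}\big(\tfrac1u\int_0^u\varphi\big)^q\,\mathrm du$, which I want to bound below by $\tfrac{p}{q}x^q u$. The trouble is that this lower bound must be uniform over all admissible $\varphi$, and a generic $\varphi$ could put very little mass near the origin; so taking the same $\varphi$ naively is \emph{not} enough. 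Instead, the right move is the reverse one: start from a near-optimal $\varphi^\ast$ for $\mathcal{B}(x,y,s,t)$ — wait, that goes the wrong way too.

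The correct approach is a rearrangement/concatenation argument. Given a near-optimizer $\psi$ for $\mathcal{B}(x,y,s,t-u)$, I build a competitor $\varphi$ for $\mathcal{B}(x,y,s,t)$ by inserting a flat piece of height $x$ at the front: more precisely, one would like $\varphi$ to look like the constant $x$ on an initial interval and a rescaled copy of $\psi$ afterward, arranged so that the total averages over $[0,s]$ remain $x$ and $y$. Since prepending the constant $x$ does not change the first-moment average (it stays $x$) and \emph{decreases} the $p$-th moment average (as $x^p\le y$), there is slack in the second constraint; the deficit can be absorbed by suitably stretching $\psi$ vertically, or more cleanly by using the monotonicity of $\mathcal{B}$ in $y$ that follows from its definition (larger $y$ enlarges the competitor class, hence $\mathcal{B}$ is nondecreasing in $y$, so relaxing the constraint $\tfrac1s\int\varphi^p=y$ to $\le y$ is harmless for a lower bound). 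Then on the initial interval $[0,u']$ where $\varphi\equiv x$, the contribution to $\int_0^a u^{q/p-1}(\tfrac1u\int_0^u\varphi)^q\,\mathrm du$ is exactly $\int_0^{u'} u^{q/p-1}x^q\,\mathrm du=\tfrac{p}{q}x^q (u')^{q/p}$, and choosing the lengths so that $(u')^{q/p}=u$ and the remaining integral reproduces $\int_0^{(t-u)^{p/q}}$ for $\psi$ gives exactly $\tfrac{p}{q}x^qu+\int_0^{(t-u)^{p/q}}\!\!u^{q/p-1}(\tfrac1u\int_0^u\psi)^q\,\mathrm du$. Taking the supremum over $\psi$ yields $3^\circ{}'$.

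In more detail, the key computation is to check that $u'=u^{p/q}$ is the right initial length: with $\varphi(r)=x$ for $r\in[0,u^{p/q}]$ and $\varphi(r)=\psi\big(\tfrac{s-u^{p/q}}{s}\cdot\text{(shift)}\big)$-type rescaling on $[u^{p/q},s]$, one verifies that $\int_0^{u^{p/q}}r^{q/p-1}x^q\,\mathrm dr=\tfrac{p}{q}x^q u$, and that a linear change of variables maps the tail integral onto $\int_0^{(t-u)^{p/q}}$ against the \emph{same} weight, because the weight $r^{q/p-1}$ is a power and hence behaves well under affine rescalings only up to constants — this is the point requiring care, and I expect it to be the main obstacle. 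The clean way around it is to scale in the variable $u$ so that $\tfrac1u\int_0^u\varphi$ is controlled: one can instead define $\varphi$ on $[0,s]$ by keeping $\psi$ on an initial block $[0,s']$ and appending the constant $x$ afterward, which leaves $\tfrac1u\int_0^u\varphi$ unchanged for $u\le s'$ and makes it a convex-combination-type quantity for $u>s'$; since $\tfrac1u\int_0^u\varphi\ge x$ would not generally hold, the front-insertion version above (constant $x$ first) is what actually forces the $x^q$ contribution on $[0,u^{p/q}]$. So the plan is: (1) reduce via monotonicity of $\mathcal{B}$ in $y$ to competitors with relaxed $p$-th moment; (2) given $\psi$ for $\mathcal{B}(x,y,s,t-u)$, define $\varphi$ on $[0,s]$ equal to $x$ on $[0,u^{p/q}]$ and a translated/scaled copy of $\psi$ on the rest, chosen to preserve the first moment and not increase the $p$-th moment; (3) compute $\int_0^{t^{p/q}}r^{q/p-1}(\tfrac1r\int_0^r\varphi)^q\,\mathrm dr\ge \tfrac{p}{q}x^q u+\int_0^{(t-u)^{p/q}}r^{q/p-1}(\tfrac1r\int_0^r\psi)^q\,\mathrm dr$, using that averages of $\varphi$ over $[0,r]$ for $r\ge u^{p/q}$ dominate the corresponding averages of $\psi$ over the shrunk interval; (4) take the supremum over $\psi$. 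The delicate inequality in step (3) — comparing the average of the spliced function to that of $\psi$ — is where the bulk of the work sits.
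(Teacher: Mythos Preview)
Your proposal is not a proof but a sequence of exploratory ideas, and the plan you settle on has a genuine gap. The crucial step (3) --- that the spliced function $\varphi$ (constant $x$ on $[0,u^{p/q}]$, then a compressed copy of $\psi$) satisfies
\[
\int_{u^{p/q}}^{t^{p/q}} r^{q/p-1}\Big(\tfrac1r\int_0^r\varphi\Big)^q\,\mathrm{d}r\ \ge\ \int_0^{(t-u)^{p/q}} r^{q/p-1}\Big(\tfrac1r\int_0^r\psi\Big)^q\,\mathrm{d}r
\]
--- is false in general. If $\psi$ carries most of its mass on a tiny neighborhood of $0$ (which is perfectly consistent with the constraints $\tfrac1s\int\psi=x$, $\tfrac1s\int\psi^p=y$), then $\tfrac1r\int_0^r\psi$ is enormous for small $r$, and the right side above can be arbitrarily large. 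But your $\varphi$ equals $x$ on $[0,u^{p/q}]$, so for $r$ just past $u^{p/q}$ the average $\tfrac1r\int_0^r\varphi$ is essentially $x$; the compressed copy of $\psi$ only begins to contribute at $r=u^{p/q}$, and its spike is now diluted by the long flat prefix. After the substitution $r=(v+u)^{p/q}$ the comparison reduces to a pointwise inequality between a convex combination of $x$ and a $\psi$-average versus $\tfrac{1}{v^{p/q}}\int_0^{v^{p/q}}\psi$, and there is simply no reason for it to hold when the latter is much larger than $x$. Putting the constant block \emph{first} is precisely the wrong move: it destroys whatever concentration $\psi$ had near the origin.

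The paper's argument sidesteps all of this with one idea: given any competitor $\varphi$ for $\mathcal{B}(x,y,s,t-u)$, pass to its nonincreasing rearrangement $\tilde\varphi$ on $[0,s]$. Since $\tilde\varphi$ is equimeasurable with $\varphi$, the constraints $\tfrac1s\int\tilde\varphi=x$ and $\tfrac1s\int\tilde\varphi^p=y$ hold exactly (no monotonicity in $y$ needed), so $\tilde\varphi$ is itself a competitor for $\mathcal{B}(x,y,s,t)$. The Hardy--Littlewood inequality gives $\tfrac1w\int_0^w\tilde\varphi\ge\tfrac1w\int_0^w\varphi$ for every $w$, and because $\tilde\varphi$ is nonincreasing its running average satisfies $\tfrac1w\int_0^w\tilde\varphi\ge\tfrac1s\int_0^s\tilde\varphi=x$. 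Splitting $\int_0^{t^{p/q}}=\int_0^{(t-u)^{p/q}}+\int_{(t-u)^{p/q}}^{t^{p/q}}$ and using these two bounds respectively gives
\[
\mathcal{B}(x,y,s,t)\ \ge\ \int_0^{(t-u)^{p/q}} w^{q/p-1}\Big(\tfrac1w\int_0^w\varphi\Big)^q\,\mathrm{d}w\ +\ \tfrac{p}{q}x^q u,
\]
and taking the supremum over $\varphi$ finishes. No splicing, no rescaling, no change of variables in the weight.
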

\begin{proof}
 Pick an arbitrary $\varphi$ as in the definition of $\mathcal{B}(x,y,s,t-u)$.  Let $\tilde \varphi$ be the decreasing rearrangement of $\varphi$: $\tilde{\varphi}$ has the same distribution as $\varphi$ and is nonincreasing on $[0,s]$. Then $\tilde{\varphi}$ is taken into account when computing $\mathcal{B}(x,y,s,t-u)$. Furthermore, by monotonicity, we have $\frac{1}{w}\int_0^w \tilde \varphi\geq \frac{1}{w}\int_0^w \varphi$ and $\frac{1}{w}\int_0^w \tilde \varphi\geq \frac{1}{s}\int_0^s \tilde \varphi=x$ for all $w$. Therefore,
\begin{align*}
&\mathcal{B}(x,y,s,t)\\
 &\geq \int_0^{t^{p/q}} w^{q/p-1}\left(\frac{1}{w}\int_0^w \tilde \varphi\right)^q\mbox{d}w\\
&=\int_0^{(t-u)^{p/q}} w^{q/p-1}\left(\frac{1}{w}\int_0^w \tilde \varphi\right)^q\mbox{d}w+\int_{(t-u)^{p/q}}^{t^{p/q}} w^{q/p-1}\left(\frac{1}{w}\int_0^w \tilde \varphi\right)^q\mbox{d}w\\
&\geq  \int_0^{(t-u)^{p/q}} w^{q/p-1}\left(\frac{1}{w}\int_0^w \varphi\right)^q\mbox{d}w+\frac{p}{q}x^qu.
\end{align*}
Taking the supremum over $\varphi$, we get 3$^\circ$'.
\end{proof}

\begin{lemma}
The function $\mathcal{B}$ enjoys the condition 3$^\circ$''.
\end{lemma}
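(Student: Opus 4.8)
The plan is to deduce 3$^\circ$'' directly from the definition \eqref{defB} by the same ``paste competitors'' strategy used for 3$^\circ$': take near-optimal test functions for the two summands on the right, glue them into one admissible test function for $\mathcal{B}(x,y,s,t)$, and compare the integrals. Fix $\varepsilon>0$ and choose $\psi_i:[0,s_i]\to[0,\infty)$, $i=1,2$, with $\frac1{s_i}\int_0^{s_i}\psi_i=x_i$, $\frac1{s_i}\int_0^{s_i}\psi_i^p=y_i$ and
$$ \int_0^{t_i^{p/q}}u^{q/p-1}\Big(\frac1u\int_0^u\psi_i\Big)^q\mbox{d}u\ge \mathcal{B}(x_i,y_i,s_i,t_i)-\varepsilon. $$
Exactly as in the proof of 3$^\circ$', replacing each $\psi_i$ by its decreasing rearrangement does not decrease the corresponding integral (the running averages only grow), so I may and do assume $\psi_1,\psi_2$ nonincreasing. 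Now let $\varphi$ be the nonincreasing rearrangement on $[0,s]$ of the concatenation $\psi_1\oplus\psi_2$ (the function equal to $\psi_1$ on $[0,s_1]$ and to $\psi_2(\cdot-s_1)$ on $(s_1,s]$). Since $\varphi$ has the same distribution as $\psi_1\oplus\psi_2$, the balance relations $s_1x_1+s_2x_2=sx$ and $s_1y_1+s_2y_2=sy$ give $\frac1s\int_0^s\varphi=x$ and $\frac1s\int_0^s\varphi^p=y$; together with $x^p\le y$ and $s\ge t^{p/q}$ this makes $\varphi$ an admissible competitor in \eqref{defB}, whence
$$ \mathcal{B}(x,y,s,t)\ \ge\ \int_0^{t^{p/q}}u^{q/p-1}A(u)^q\,\mbox{d}u,\qquad A(u):=\frac1u\int_0^u\varphi. $$

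The heart of the matter is then the ``rearranged superadditivity''
$$ \int_0^{t^{p/q}}u^{q/p-1}A(u)^q\,\mbox{d}u\ \ge\ \int_0^{t_1^{p/q}}w^{q/p-1}A_1(w)^q\,\mbox{d}w+\int_0^{t_2^{p/q}}w^{q/p-1}A_2(w)^q\,\mbox{d}w, $$
where $A_i(w)=\frac1w\int_0^w\psi_i$. I would prove it by substituting $r=\frac pq w^{q/p}$, which rewrites the three integrals as integrals of the $q$-th powers of nonincreasing functions of $r$ over $[0,\frac pq t]$, $[0,\frac pq t_1]$ and $[0,\frac pq t_2]$. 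Since $\frac pq t=\frac pq t_1+\frac pq t_2$, the inequality follows from the Hardy--Littlewood rearrangement inequality once one checks that the nonincreasing rearrangement of $A_1\oplus A_2$ (in the variable $r$) is pointwise dominated by the transformed function $A$; equivalently, that $|\{A_1>v\}|+|\{A_2>v\}|\le|\{A>v\}|$ for a.e.\ $v>0$, the measures being computed inside the respective $r$-intervals. Translating back to the $w$-variable, $|\{A_i>v\}|=\frac pq a_i^{q/p}$ with $a_i:=\min\{t_i^{p/q},\sup\{w:A_i(w)>v\}\}$, and $|\{A>v\}|=\frac pq\min\{t^{p/q},\sup\{u:A(u)>v\}\}^{q/p}$; since $a_i^{q/p}\le t_i$, the bound by $\frac pq t$ is immediate, so the only real point is
$$ a_1^{q/p}+a_2^{q/p}\le\big(\sup\{u:A(u)>v\}\big)^{q/p}. $$
Here the hypothesis $p\le q$ enters through the elementary subadditivity $(a_1^{q/p}+a_2^{q/p})^{p/q}\le a_1+a_2$, which reduces the task to proving $A(a_1+a_2)\ge v$; and this holds because $a_i\le t_i^{p/q}\le s_i$ (so $a_1+a_2\le s_1+s_2=s$), because $\int_0^{a_1+a_2}\varphi\ge\int_0^{a_1}\psi_1+\int_0^{a_2}\psi_2$ (a property of the nonincreasing rearrangement of a concatenation), and because $\int_0^{a_i}\psi_i\ge a_iv$ by the definition of $a_i$ and the monotonicity of $A_i$.

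Combining the two displayed estimates yields $\mathcal{B}(x,y,s,t)\ge\mathcal{B}(x_1,y_1,s_1,t_1)+\mathcal{B}(x_2,y_2,s_2,t_2)-2\varepsilon$, and letting $\varepsilon\downarrow0$ proves 3$^\circ$''. I expect the genuine obstacle to be the choice of competitor in the crux step: a naive side-by-side pasting of $\psi_1$ and $\psi_2$ does \emph{not} work, since the length $t^{p/q}$ of the outer integration is \emph{strictly smaller} than $t_1^{p/q}+t_2^{p/q}$, so the two integrals cannot simply be lined up. One must both pass to the decreasing rearrangement (to maximize the running averages and to invoke the rearrangement inequality) \emph{and} weight the outer integral by $w^{q/p-1}\,\mbox{d}w$, after which the inequality $(a^{q/p}+b^{q/p})^{p/q}\le a+b$ — valid precisely because $p\le q$ — absorbs the deficit. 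The remaining verifications (admissibility of $\varphi$, the change of variables, and the measure-zero set of exceptional levels $v$) are routine.
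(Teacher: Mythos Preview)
Your proof is correct, and its opening moves coincide with the paper's: pick near-optimizers $\psi_1,\psi_2$, pass to decreasing rearrangements, concatenate, and rearrange again to obtain an admissible $\varphi$ for $\mathcal{B}(x,y,s,t)$. The divergence comes at the ``superadditivity'' step. The paper introduces the two-variable auxiliary function
\[
\Psi(w_1,w_2)=\int_0^{w_1+w_2}\Big(\e+\tfrac{1}{u^{p/q}}\!\int_0^{u^{p/q}}\tilde\varphi\Big)^q\!\mbox{d}u-\sum_{i=1}^2\int_0^{w_i}\Big(\tfrac{1}{u^{p/q}}\!\int_0^{u^{p/q}}\varphi_i\Big)^q\!\mbox{d}u,
\]
checks that $\Psi>0$ on the two axes, and then shows that at every interior point at least one of $\Psi_{w_1},\Psi_{w_2}$ is strictly positive (this is where the $\e$ and the inequality $(w_1+w_2)^{p/q}\le w_1^{p/q}+w_2^{p/q}$ enter). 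A short minimum-principle argument then forces $\Psi>0$ on the whole rectangle, and one lets $\e\downarrow0$. You instead make the change of variable $r=\tfrac{p}{q}w^{q/p}$, apply the layer-cake formula, and reduce everything to the distribution-function inequality $|\{A_1>v\}|+|\{A_2>v\}|\le|\{A>v\}|$ for a.e.\ $v$, which you verify via the same rearrangement fact $\int_0^{a_1+a_2}\varphi\ge\int_0^{a_1}\psi_1+\int_0^{a_2}\psi_2$ and the same subadditivity of $x\mapsto x^{p/q}$. The two arguments therefore rest on identical ingredients; yours is a direct level-set computation, while the paper's packages the comparison into a differential/positivity argument for $\Psi$. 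Your route makes the role of $p\le q$ especially transparent, and the only delicate point---that $A(a_1+a_2)\ge v$ yields $a_1+a_2\le|\{A>v\}|$ merely for a.e.\ $v$---is exactly the ``exceptional levels'' caveat you flag at the end; since a monotone function takes any given value on a set of positive measure for at most countably many values, this is indeed harmless for the layer-cake integral.
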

\begin{proof}
Here the argument is more complicated. Pick arbitrary $\varphi_1$ and $\varphi_2$ as in the definitions of $\mathcal{B}(x_1,y_1,s_1,t_1)$ and $\mathcal{B}(x_2,y_2,s_2,t_2)$. Concatenate them into a single function $ \varphi$ on $[0,s_1+s_2]$, setting $\varphi(r)=\varphi_1(r)$ if $r\in [0,s_1]$ and $\varphi(r)=\varphi_2(r-s_1)$ if $s\in [s_1,s_1+s_2]$. Next, let $\tilde \varphi:[0,s_1+s_2]\to [0,\infty)$ be the decreasing rearrangement of $ \varphi$. We have
\begin{align*}
 \frac{1}{s_1+s_2}\int_0^{s_1+s_2} \tilde \varphi&=\frac{1}{s_1+s_2}\int_0^{s_1}\varphi_1+\frac{1}{s_1+s_2}\int_0^{s_2}\varphi_2\\
&=\frac{s_1x_1}{s_1+s_2}+\frac{s_2x_2}{s_1+s_2}=x
\end{align*}
and similarly $ \frac{1}{s_1+s_2}\int_0^{s_1+s_2} \tilde \varphi^p=y.$ 
Consequently, we have
\begin{align*}
\mathcal{B}(x,y,s,t)&\geq \int_0^{t^{p/q}} u^{q/p-1}\left(\frac{1}{u}\int_0^u \tilde\varphi\right)^q\mbox{d}u=\frac{p}{q}\int_0^t \left(\frac{1}{u^{p/q}}\int_0^{u^{p/q}} \tilde\varphi\right)^q\mbox{d}u.
\end{align*}
Now, fix $\e>0$ and consider the function $\Psi:[0,t_1]\times[0,t_2]\to \R$ given by
\begin{align*} 
\Psi(w_1,w_2)&=\int_0^{w_1+w_2} \left(\e+\frac{1}{u^{p/q}}\int_0^{u^{p/q}} \tilde\varphi\right)^q\mbox{d}u\\
&\quad 
-\int_0^{w_1} \left(\frac{1}{u^{p/q}}\int_0^{u^{p/q}} \varphi_1\right)^q\mbox{d}u
-\int_0^{w_2} \left(\frac{1}{u^{p/q}}\int_0^{u^{p/q}} \varphi_2\right)^q\mbox{d}u.
\end{align*}
Note that $\Psi$ is of class $C^1$. Furthermore, since $\tilde\varphi$ is the nonincreasing rearrangement of the concatenation of $\varphi_1$ and $\varphi_2$, the function $\Psi$ is positive on the axes: $\Psi(0,w_2)> 0$ and $\Psi(w_1,0)> 0$. Now, fix $w=(w_1,w_2)$ with $w_1$, $w_2$ strictly positive. 
Since $p\leq q$, we have $(w_1+w_2)^{p/q}\leq  w_1^{p/q}+w_2^{p/q}$ and  
\begin{align*}
 &\e+\frac{1}{(w_1+w_2)^{p/q}}\int_0^{(w_1+w_2)^{p/q}} \tilde\varphi\\
&>  \frac{1}{w_1^{p/q}+w_2^{p/q}}\int_0^{w_1^{p/q}+w_2^{p/q}} \tilde\varphi\\
 &\geq \frac{1}{w_1^{p/q}+w_2^{p/q}}\left(\int_0^{w_1^{p/q}} \varphi_1+\int_0^{w_2^{p/q}} \varphi_2\right)\\
 &= \frac{w_1^{p/q}}{w_1^{p/q}+w_2^{p/q}}\cdot \frac{1}{w_1^{p/q}}\int_0^{w_1^{p/q}} \varphi_1+\frac{w_2^{p/q}}{w_1^{p/q}+w_2^{p/q}}\cdot \frac{1}{w_2^{p/q}}\int_0^{w_2^{p/q}} \varphi_2.
 \end{align*}
The second inequality  above is due to the fact that $\tilde\varphi$ is the rearrangement of the concatenation of $\varphi_1$ and $\varphi_2$. Therefore, we conclude that
\begin{equation}\label{strict}
\begin{split}
& \e+\frac{1}{(w_1+w_2)^{p/q}}\int_0^{(w_1+w_2)^{p/q}} \tilde\varphi\geq  \min\left\{\frac{1}{w_1^{p/q}}\int_0^{w_1^{p/q}} \varphi_1,\, \frac{1}{w_2^{p/q}}\int_0^{w_2^{p/q}} \varphi_2\right\}.
\end{split}
\end{equation}
Thus, we have $\Psi_{w_1}(w_1,w_2)> 0$ or $\Psi_{w_2}(w_1,w_2)> 0$. Since $\Psi$ is positive on the axes, this easily implies that $\Psi $ is positive on its full domain. Letting $\e\to 0$, we obtain
\begin{align*}
 \mathcal{B}(x,y,s,t)&\geq \frac{p}{q}\int_0^{t_1+t_2} \left(\frac{1}{u^{p/q}}\int_0^{u^{p/q}} \tilde\varphi\right)^q\mbox{d}u\\
&\geq 
\frac{p}{q}\int_0^{t_1} \left(\frac{1}{u^{p/q}}\int_0^{u^{p/q}} \varphi_1\right)^q\mbox{d}u
+\frac{p}{q}\int_0^{t_2} \left(\frac{1}{u^{p/q}}\int_0^{u^{p/q}} \varphi_2\right)^q\mbox{d}u.
\end{align*}
Taking the supremum over $\varphi_1$ and $\varphi_2$ as above gives the desired property 3$^\circ$''.
\end{proof}

\section{Proof of Theorem \ref{mainthm}}

\subsection{Proof of \eqref{mainin}} Let $\sigma=v^{1-p'}$ be the dual weight to $v$. In our considerations below, we will use the notation 
$$ \langle \psi\rangle_{Q,\sigma}=\frac{1}{\sigma(Q)}\int_Q \psi \mbox{d}\sigma$$
for the associated weighted average. A central role in the proof of our main estimate is played by the following sharp fractional version of the Carleson imbedding theorem. The constant $C_{p,q}$ appearing below is as in \eqref{defC}.

\begin{theorem}\label{change_of_measure}
Suppose that $(a_Q)_{Q\in \mathcal{T}}$ is a sequence of nonnegative numbers satisfying the Carleson condition
\begin{equation}\label{Carle}
\left(\sum_{Q'\in \mathcal{T}(Q)} a_{Q'}\right)^{1/q}\leq \sigma(Q)^{1/p}\qquad \mbox{for all }Q\in \mathcal{T}.
\end{equation}
Then for any nonnegative function $\varphi$ on $X$ we have
\begin{equation}\label{assertion}
\left(\sum_{Q\in \mathcal{T}} a_Q \langle \varphi\rangle_{Q,\sigma}^q \right)^{1/q}\leq C_{p,q}\left(\int_X \varphi^p\mbox{d}\sigma\right)^{1/p}.
\end{equation}
The constant is the best possible.
\end{theorem}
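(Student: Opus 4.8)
The plan is to deduce the estimate \eqref{assertion} directly from the three properties of the function $\mathcal{B}$ collected in Theorem \ref{PropB}, by means of a Bellman-type induction over the tree $\mathcal{T}$, and then to establish sharpness by discretizing the extremizers of Bliss' inequality \eqref{bliss}. We may assume $\varphi\in L^p(\sigma)$ and $0<\int_X\varphi^p\,\mathrm{d}\sigma<\infty$, the other cases being trivial.

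For the bound \eqref{assertion}, to each $Q\in\mathcal{T}$ I would attach the point
$$ P_Q=\Big(\langle\varphi\rangle_{Q,\sigma},\ \langle\varphi^p\rangle_{Q,\sigma},\ \sigma(Q),\ t_Q\Big),\qquad t_Q:=\sum_{Q'\in\mathcal{T}(Q)}a_{Q'},$$
where $\mathcal{T}(Q)=\{Q'\in\mathcal{T}:Q'\subseteq Q\}$. First one checks that $P_Q\in\mathcal{D}$ for every $Q$: Jensen's inequality gives $\langle\varphi\rangle_{Q,\sigma}^p\leq\langle\varphi^p\rangle_{Q,\sigma}$, the Carleson condition \eqref{Carle} at $Q$ is exactly $t_Q\leq\sigma(Q)^{q/p}$, and $\sigma(Q)>0$ by Definition \ref{tree}(i). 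Next, if $C(Q)=\{Q_1,\dots,Q_m\}$, then additivity of $\int_{\cdot}\varphi\,\mathrm{d}\sigma$, $\int_{\cdot}\varphi^p\,\mathrm{d}\sigma$ and $\sigma(\cdot)$ over the partition $Q=\bigcup C(Q)$, together with $t_Q=a_Q+\sum_i t_{Q_i}$, shows that the points $P_Q,P_{Q_1},\dots,P_{Q_m}$ and the number $u=a_Q$ satisfy precisely the constraints \eqref{conditions}. Hence property 3$^\circ$ yields
$$ \mathcal{B}(P_Q)\geq\frac{p}{q}\,a_Q\langle\varphi\rangle_{Q,\sigma}^q+\sum_{i=1}^m\mathcal{B}(P_{Q_i}).$$
Summing this over $Q\in\mathcal{T}^m$, using $\mathcal{T}^{m+1}=\bigcup_{Q\in\mathcal{T}^m}C(Q)$, telescoping from $m=0$ up to $m=N-1$, and discarding the nonnegative tail $\sum_{Q\in\mathcal{T}^N}\mathcal{B}(P_Q)$ via property 1$^\circ$, we obtain $\mathcal{B}(P_X)\geq\frac{p}{q}\sum_{Q\in\bigcup_{m=0}^{N-1}\mathcal{T}^m}a_Q\langle\varphi\rangle_{Q,\sigma}^q$ for every $N$. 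Letting $N\to\infty$ (monotone convergence), applying property 2$^\circ$ at $P_X$, and noting $\sigma(X)\langle\varphi^p\rangle_{X,\sigma}=\int_X\varphi^p\,\mathrm{d}\sigma$, gives
$$ \frac{p}{q}\sum_{Q\in\mathcal{T}}a_Q\langle\varphi\rangle_{Q,\sigma}^q\leq\mathcal{B}(P_X)\leq\frac{p}{q}\,C_{p,q}^q\Big(\int_X\varphi^p\,\mathrm{d}\sigma\Big)^{q/p},$$
which is \eqref{assertion} after cancelling $p/q$ and taking $q$-th roots.

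For sharpness, fix $c<C_{p,q}$. Bliss' inequality \eqref{bliss} being optimal, there are $s>0$ and a (say, continuous) nonnegative $\varphi$ on $(0,s)$ with $\frac{q}{p}\int_0^s w^{q/p-1}\big(\tfrac1w\int_0^w\varphi\big)^q\,\mathrm{d}w>c^q\big(\int_0^s\varphi^p\big)^{q/p}$; approximating the left side by Riemann--Stieltjes sums against $\mathrm{d}(w^{q/p})$ produces a partition $0=w_0<w_1<\dots<w_N=s$ with $\sum_{k=1}^N\big(w_k^{q/p}-w_{k-1}^{q/p}\big)\big(\tfrac1{w_k}\int_0^{w_k}\varphi\big)^q>c^q\big(\int_0^s\varphi^p\big)^{q/p}$. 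I would then take $X=(0,1)$ with $\mu$ Lebesgue measure and $v\equiv 1$ (so $\sigma=\mu$), and build a tree whose distinguished nested chain is $Q_k=(0,w_k/s)$, $k=N,N-1,\dots,1$, each $Q_k$ splitting into $Q_{k-1}$ and the complementary interval, the complementary intervals and $Q_1$ being further subdivided dyadically so that Definition \ref{tree} is met. Setting $a_{Q_k}=(w_k/s)^{q/p}-(w_{k-1}/s)^{q/p}$ and $a_Q=0$ otherwise makes \eqref{Carle} hold (with equality along the chain, since the $a$'s telescope to $\sigma(Q_k)^{q/p}$). A direct computation with $\varphi_0(x):=\varphi(sx)$ then shows that the common factor $s^{-q/p}$ cancels and $\big(\sum_Q a_Q\langle\varphi_0\rangle_{Q,\sigma}^q\big)^{1/q}>c\big(\int_X\varphi_0^p\,\mathrm{d}\sigma\big)^{1/p}$, so $C_{p,q}$ cannot be improved. (Optimality is also a consequence of the extremal examples constructed for Theorem \ref{mainthm} in the next subsection.)

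The substantive work has already been done in Theorem \ref{PropB}; given its three properties, the upper bound is a routine telescoping over the tree. The only delicate point remaining is the sharpness construction — one must verify that the discretized sequence $(a_Q)$ is Carleson-admissible and that passing to a probability space does not destroy the near-extremality inherited from Bliss' inequality.
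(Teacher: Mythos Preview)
Your argument for the inequality \eqref{assertion} is essentially identical to the paper's: the same Bellman induction over the tree using properties 1$^\circ$--3$^\circ$ of $\mathcal{B}$, with your $P_Q$ being exactly the paper's $(\varphi_n,\psi_n,\xi_n,\eta_n)|_Q$.

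The only real difference is in the sharpness. The paper's proof block for this theorem handles the inequality only; optimality of $C_{p,q}$ is inherited from the explicit weight/tree examples built in \S3.2 for Theorem \ref{mainthm} (which in the end reduce back to Bliss' inequality on $(0,\infty)$ via a substitution and a dilation). Your route is more direct and self-contained for the Carleson statement itself: you discretize a near-extremizer of \eqref{bliss} into a finite chain of nested intervals with telescoping coefficients $a_{Q_k}=(w_k/s)^{q/p}-(w_{k-1}/s)^{q/p}$. This avoids having to verify a two-weight testing condition and works for all $1<p\le q<\infty$ at once, whereas the paper's example is tied to the fractional maximal operator and to the range $\alpha\ge 1/p-1/q$. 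Your construction is correct (the Carleson sums telescope to $\sigma(Q_k)^{q/p}$ along the chain and vanish off it, and the scaling by $s^{-q/p}$ cancels as you say); the one point to phrase carefully is the Riemann--Stieltjes approximation near $w=0$, where the integrand may be singular but integrable --- just take the partition fine only on $[\delta,s]$ with $\delta$ small.
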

\begin{proof}
We will exploit the abstract Bellman function $\mathcal{B}$ introduced in the previous section. Fix a sequence $(a_Q)_{Q\in \mathcal{T}}$ and a function $\varphi$ as in the statement of the theorem. Consider the sequences $(\varphi_n)_{n\geq 0}$, $(\psi_n)_{n\geq 0}$, $(\xi_n)_{n\geq 0}$ and $(\eta_n)_{n\geq 0}$ of functions on $X$, given as follows. For $\omega\in X$, set
$$ \varphi_n(\omega)=\langle \varphi\rangle_{Q_n(\omega),\sigma},\qquad \psi_n(\omega)=\langle \varphi^p\rangle_{Q_n(\omega),\sigma},\qquad \xi_n(\omega)=\sigma(Q_n(\omega))$$
and
$$ \eta_n(\omega)=\sum_{Q'\in \mathcal{T}(Q_n(\omega))} a_{Q'},$$
where $Q_n(\omega)$ is the unique element of $\mathcal{T}^{(n)}$ which contains $\omega$. 

\smallskip

Fix $n$ and an arbitrary element $Q$ of $\mathcal{T}^{(n)}$. Then $\varphi_n$, $\psi_n$, $\xi_n$ and $\eta_n$ are constant on $Q$. Denote  their corresponding values by $x$, $y$, $s$ and $t$. Next, let $Q_1$, $Q_2$, $\ldots$, $Q_m$ be the children of $Q$ belonging to $\mathcal{T}^{(n+1)}$.  That is, the sets $Q_1$, $Q_2$, $\ldots$, $Q_m$ are pairwise disjoint, belong to $\mathcal{T}^{(n+1)}$ and their union is $Q$. Then the functions $\varphi_{n+1}$, $\psi_{n+1}$, $\xi_{n+1}$ and $\eta_{n+1}$ are constant on each $Q_j$.  Denote their corresponding values by $(x_j,y_j,s_j,t_j):=(\varphi_{n+1},\psi_{n+1},\xi_{n+1},\eta_{n+1})|_{Q_j}$. It is easy to see that the conditions \eqref{conditions} are satisfied, with $u=a_{Q}$. For example,
$$ \sum_{i=1}^m s_ix_i=\sum_{i=1}^m \sigma(Q_i)\langle \varphi\rangle_{Q_i,\sigma}=\sum_{i=1}^m \int_{Q_i}\varphi\mbox{d}\sigma=\int_Q \varphi\mbox{d}\sigma=\sigma(Q)\langle \varphi\rangle_{Q}=sx$$
and the remaining identities are checked similarly. Consequently, the condition 3$^\circ$ of Theorem \ref{PropB} implies
$$ \mathcal{B}(x,y,s,t)\geq \frac{p}{q}x^qu+\sum_{i=1}^m \mathcal{B}(x_i,y_i,s_i,t_i),$$
which is equivalent to
$$ \mathcal{B}(\varphi_n,\psi_n,\xi_n,\eta_n)|_Q\geq \frac{p}{q}a_Q\langle \varphi\rangle_Q^q+\sum_{i=1}^m \mathcal{B}(\varphi_{n+1},\psi_{n+1},\xi_{n+1},\eta_{n+1})|_{Q_i}.$$
Consequently, summing over all $Q\in \mathcal{T}^{(n)}$, we obtain
\begin{align*}
 &\sum_{Q\in \mathcal{T}^{(n)}}\mathcal{B}(\varphi_n,\psi_n,\xi_n,\eta_n)|_Q\\
&\qquad \qquad \geq \frac{p}{q}\sum_{Q\in \mathcal{T}^{(n)}}a_Q\langle \varphi\rangle_Q^q+\sum_{Q\in \mathcal{T}^{(n+1)}}\mathcal{B}(\varphi_{n+1},\psi_{n+1},\xi_{n+1},\eta_{n+1})|_Q.
\end{align*}
Therefore, by induction and the property 1$^\circ$ of Theorem \ref{PropB},
\begin{align*}
\mathcal{B}(\varphi_0,\psi_0,\xi_0,\eta_0)|_X
&=\sum_{Q\in \mathcal{T}^{(0)}}\mathcal{B}(\varphi_n,\psi_n,\xi_n,\eta_n)|_Q\\
&\geq \frac{p}{q}\,\sum_{n=0}^{N-1} \sum_{Q\in \mathcal{T}^{(n)}}a_Q \langle \varphi\rangle_Q^q+\sum_{Q\in \mathcal{T}^{(N)}}\mathcal{B}(\varphi_{N},\psi_{N},\xi_{N},\eta_{N})|_Q\\
&\geq \frac{p}{q}\,\sum_{n=0}^{N-1} \sum_{Q\in \mathcal{T}^{(n)}}a_Q \langle \varphi\rangle_Q^q. 
\end{align*}
Letting $N\to \infty$ we arrive at
$$ \mathcal{B}(\varphi_0,\psi_0,\xi_0,\eta_0)|_X\geq \frac{p}{q}\,\sum_{Q\in \mathcal{T}}a_Q \langle \varphi\rangle_Q^q,$$
by Lebesgue's monotone convergence theorem. Finally, we exploit the condition 2$^\circ$ to obtain
\begin{align*}
\mathcal{B}(\varphi_0,\psi_0,\xi_0,\eta_0)|_X&=\mathcal{B}\left(\langle \varphi\rangle_{X,\sigma},\langle \varphi^p\rangle_{X,\sigma},\sigma(X),\sum_{Q\in \mathcal{T}}a_Q\right)\leq \frac{p}{q}C_{p,q}^q \left(\int_X \varphi^p\mbox{d}\sigma\right)^{q/p},
\end{align*}
which combined with the preceding estimate yields the assertion.
\end{proof}

We are now ready for the proof of our main result.

\begin{proof}[Proof of \eqref{mainin}]
It is enough to  establish the estimate
\begin{equation}\label{wewant}
 \left(\int_X (\mathcal{M}^\alpha_\mathcal{T} g)^qu\right)^{1/q}\leq C_{p,q}\left(\int_X g^pv\right)^{1/p}
\end{equation}
for $\mathcal{T}$-simple functions $g$,  that is, for linear combinations of characteristic functions of elements of $\mathcal{T}$. Let us  first linearize the fractional maximal operator. Since $g$ is simple, for each $\omega$ there is $\tilde{Q}=\tilde{Q}(\omega)\in \mathcal{T}$ such that $\mathcal{M}^\alpha_\mathcal{T} g=\mu(\tilde Q)^\alpha \langle g\rangle_{\tilde Q}$. Such a $\tilde Q$ need not be unique: if this is the case, we pick $\tilde Q\in \mathcal{T}^{(n)}$ with $n$ as small as possible. Now, for a given $Q\in \mathcal{T}$, let
$$ E(Q)=\{\omega\in X:\tilde Q(\omega)=Q\}.$$
By the construction, we see that $E(Q)\subset Q$ and the sets $\{E(Q)\}_{Q\in \mathcal{T}}$ are pairwise disjoint. The aforementioned linearization of $\mathcal{M}^\alpha_\mathcal{T}$ reads
$$ \mathcal{M}^\alpha_\mathcal{T} g=\sum_{Q\in \mathcal{T}} \mu(Q)^\alpha \langle g\rangle_Q \chi_{E(Q)}.$$
Thanks to this identity, the powers of $\mathcal{M}^\alpha_\mathcal{T} g$ are handled easily: since $\{E(Q)\}_{Q\in \mathcal{T}}$ are pairwise disjoint, we have
$$ \int_X (\mathcal{M}^\alpha_\mathcal{T} g)^qu=\sum_{Q\in \mathcal{T}} (\mu(Q)^\alpha \langle g\rangle_{Q})^q u(E(Q)).$$ 
Now, setting $f=g\sigma^{-1}$, we see that 
$$ \langle g\rangle_{Q}=\frac{1}{\mu(Q)}\int_Q g\sigma^{-1}\sigma \mbox{d}\mu=\frac{\sigma(Q)}{\mu(Q)}\cdot \frac{1}{\sigma(Q)}\int_Q f\mbox{d}\sigma=\langle \sigma\rangle_{Q}\langle f\rangle_{Q,\sigma}.$$
Therefore, the left-hand side of \eqref{wewant} equals
$$ \left(\sum_{Q\in \mathcal{T}} \langle f\rangle^q_{Q,\sigma}\cdot \big(\mu(Q)^\alpha \langle \sigma \rangle_Q\big)^qu(E(Q))\right)^{1/q}=\left(\sum_{Q\in \mathcal{T}} a_Q \langle f\rangle^q_{Q,\sigma}\right)^{1/q},$$
where we have set $a_Q=\big(\mu(Q)^\alpha \langle \sigma \rangle_Q\big)^qu(E(Q))$; the right-hand side of \eqref{wewant} is
$$ C_{p,q}\left(\int_X g^pv\right)^{1/p}=C_{p,q}\left(\int_X f^p\sigma\right)^{1/p}.$$
Thus, our claim is precisely the estimate \eqref{assertion}, and hence all we need is to check whether the sequence $\{a_Q\}_{Q\in \mathcal{T}}$ satisfies the Carleson condition \eqref{Carle}. But this is precisely the testing condition: indeed, for any $Q\in \mathcal{T}$, we have, by the very definition of the fractional maximal operator,
\begin{align*}
 \left(\sum_{Q'\in \mathcal{T}(Q)} a_{Q'}\right)^{1/q}&=\left(\sum_{Q'\in \mathcal{T}(Q)}\int_{E(Q')} (\mu(Q')^\alpha \langle \sigma\rangle_{Q'})^q \mbox{d}u\right)^{1/q}\\
&\leq \left(\int_Q \mathcal{M}^\alpha_\mathcal{T} (\sigma\chi_Q)^qu\right)^{1/q}.
\end{align*}
The latter quantity is bounded from above by $(\sigma(Q))^{1/p}$, in the light of \eqref{testing}. This gives the claim.
\end{proof}

\subsection{Sharpness} Now we will prove that for $\alpha\geq 1/p-1/q$ the constant in \eqref{mainin} cannot be improved. To this end, suppose that the estimate \eqref{mainin} holds with some constant $\tilde{c}_{p,q}$. 
Consider the probability space $(X,\mathcal{F},\mu)$ equal to the interval $[0,1]$ equipped with its Borel subsets and Lebesgue's measure. Fix a large positive integer $N$ and consider the tree structure $\mathcal{T}$ given as follows: for $0\leq n\leq N$, the family $\mathcal{T}^{(n)}$ consists of the interval $\left[0,\frac{N-n}{N}\right]$ and the intervals $(\frac{k}{N},\frac{k+1}{N}]$, $k=N-n$, $N-n+1$, $\ldots$, $N-1$. The remaining families (i.e., $\mathcal{T}^{(n)}$ for $n>N$), are taken to be arbitrary families of intervals. Introduce the weights 
$$ u(\omega)=\frac{(1-\alpha)q}{p}\omega^{q(1-\alpha)/p-1},\qquad v(\omega)=(1-\alpha)^{1-p}\omega^{\alpha(p-1)},$$
so that the dual weight $\sigma=v^{1-p'}$ is given by $\sigma(\omega)=(1-\alpha)\omega^{-\alpha}$. Let us check that $u$ and $v$ satisfy the testing condition. We will need the following technical fact.

\begin{lemma}\label{technical}
For any $x>1$ and $s>0$ we have
$$ \frac{x^{1/s}+1}{s(x^{1/s}-1)}\ln x\geq 2.$$
\end{lemma}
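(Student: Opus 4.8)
The plan is to reduce the inequality to a clean one-variable statement by substituting $r = x^{1/s} > 1$ (since $x>1$ and $s>0$). Then $\ln x = s\ln r$, so the left-hand side becomes
$$ \frac{r+1}{s(r-1)}\cdot s\ln r = \frac{(r+1)\ln r}{r-1},$$
and the parameter $s$ disappears entirely. Thus the claim is equivalent to: for every $r>1$,
$$ (r+1)\ln r \geq 2(r-1).$$

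Next I would prove this single-variable inequality by a standard calculus argument. Set $h(r) = (r+1)\ln r - 2(r-1)$ for $r\geq 1$. Then $h(1)=0$, and differentiating,
$$ h'(r) = \ln r + \frac{r+1}{r} - 2 = \ln r + \frac{1}{r} - 1.$$
Note $h'(1)=0$, and $h''(r) = \frac{1}{r} - \frac{1}{r^2} = \frac{r-1}{r^2} > 0$ for $r>1$, so $h'$ is strictly increasing on $(1,\infty)$; since $h'(1)=0$ this gives $h'(r)>0$ for $r>1$, hence $h$ is strictly increasing on $[1,\infty)$ and $h(r)>h(1)=0$ for all $r>1$. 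This yields $(r+1)\ln r > 2(r-1)$ for $r>1$, and equality in the limit $r\to 1$, which is exactly the desired bound after undoing the substitution. (Alternatively, one may recognize $\frac{2(r-1)}{r+1}$ as the value at $r$ of the harmonic/logarithmic mean comparison $\ln r \geq \frac{2(r-1)}{r+1}$, a well-known estimate; the calculus derivation above is self-contained.)

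I do not expect any serious obstacle here: the only mild subtlety is making sure the substitution $r=x^{1/s}$ is legitimate and covers all relevant cases, which it does since $x>1,\,s>0$ forces $r\in(1,\infty)$, and conversely every $r>1$ arises. The inequality is strict for $r>1$, so in particular the non-strict bound $\geq 2$ claimed in the lemma holds with room to spare whenever $x>1$; the borderline value $2$ is approached only as $x\to 1^+$ (or $s\to\infty$), consistent with the statement.
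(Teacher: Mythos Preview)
Your proof is correct. The key difference from the paper's argument is your preliminary substitution $r=x^{1/s}$, which eliminates the parameter $s$ and reduces the claim to the single-variable inequality $(r+1)\ln r\geq 2(r-1)$; you then handle this by the same two-differentiation scheme ($h(1)=0$, $h'(1)=0$, $h''>0$). The paper instead fixes $s$ and differentiates directly in $x$, defining $D(x)=(x^{1/s}+1)\ln x-2s(x^{1/s}-1)$ and $E(x)=xD'(x)$, then checking $E(1)=0$, $E'\geq 0$. The two computations are in fact the same up to your change of variable: $D(x)=s\,h(r)$ and $E(x)=r\,h'(r)$. Your version is slightly cleaner since the substitution removes $s$ from the picture at the outset, but the underlying idea is identical.
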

\begin{proof}
The claim is equivalent to 
$$ D(x):=(x^{1/s}+1)\ln x-2s(x^{1/s}-1)\geq 0.$$
If we let $x\downarrow 1$, then both sides become equal; thus it is enough to prove that $D'(x)>0$ for any $x>1$. We compute directly that 
$$ D'(x)=\frac{1}{x}\left(\frac{1}{s}x^{1/s}\ln x+1-x^{1/s}\right).$$
Denoting the expression in the parentheses by $E(x)$, we see that $\lim_{x\downarrow 1}E(x)=0$ and  $ E'(x)=s^{-2}x^{1/s-1}\ln x\geq 0$. This implies $E\geq 0$, $D'\geq 0$ and finally, $D\geq 0$. The proof is complete.
\end{proof}

\begin{lemma}
The weights $u$, $v$ satisfy the testing condition \eqref{testing}.
\end{lemma}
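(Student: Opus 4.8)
The plan is to compute the maximal function $\mathcal{M}^\alpha_\mathcal{T}(\sigma\chi_Q)$ explicitly on every $Q\in\mathcal{T}$, where $\sigma=v^{1-p'}$, so that $\sigma(\omega)=(1-\alpha)\omega^{-\alpha}$ and hence $\sigma([0,a])=a^{1-\alpha}$, $u([0,a])=a^{q(1-\alpha)/p}=\sigma([0,a])^{q/p}$; this reduces \eqref{testing} to a single elementary one–variable inequality. \emph{Step 1 (the maximal function).} The elements of $\mathcal{T}$ are the ``left'' intervals $[0,a]$ together with intervals $(c,d]$ with $c>0$ (the blocks $(\tfrac kN,\tfrac{k+1}N]$ and their deeper, arbitrary subdivisions). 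One computes $\mu([0,a])^\alpha\langle\sigma\rangle_{[0,a]}=a^\alpha a^{-\alpha}=1$ and, for $0<c<d$,
$$\mu((c,d])^\alpha\langle\sigma\rangle_{(c,d]}=(d-c)^{\alpha-1}\big(d^{1-\alpha}-c^{1-\alpha}\big)=g\Big(\tfrac{d-c}{c}\Big),\qquad g(\varepsilon):=\varepsilon^{\alpha-1}\big((1+\varepsilon)^{1-\alpha}-1\big),$$
and a one–line derivative check (amounting to $(1+\varepsilon)^\alpha\ge1$) shows $g$ is increasing on $(0,\infty)$ with $\lim_{\varepsilon\to\infty}g(\varepsilon)=1$. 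Since $(c',d']\subseteq(c,d]$ forces $\tfrac{d'-c'}{c'}\le\tfrac{d-c}{c}$, monotonicity of $g$ gives: for $Q=(c,d]\in\mathcal{T}$ with $c>0$ the supremum defining $\mathcal{M}^\alpha_\mathcal{T}(\sigma\chi_Q)$ on $Q$ is attained at $Q$ itself — any larger element of $\mathcal{T}$ contains $Q$ and contributes only $\mu(Q')^{\alpha-1}\sigma(Q)\le\mu(Q)^{\alpha-1}\sigma(Q)$ — so $\mathcal{M}^\alpha_\mathcal{T}(\sigma\chi_Q)\equiv\mu(Q)^{\alpha-1}\sigma(Q)$ on $Q$; while for $Q=[0,a]\in\mathcal{T}$ the same considerations give $\mathcal{M}^\alpha_\mathcal{T}(\sigma\chi_Q)\le1$ on $Q$.

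\emph{Step 2 (reduction).} If $Q=[0,a]$ then \eqref{testing} (with $L=1$) holds since $\mathcal{M}^\alpha_\mathcal{T}(\sigma\chi_Q)\le1$ on $Q$ and $u(Q)=\sigma(Q)^{q/p}$. If $Q=(c,d]$ with $c>0$, inserting $\mathcal{M}^\alpha_\mathcal{T}(\sigma\chi_Q)=\mu(Q)^{\alpha-1}\sigma(Q)$ and raising \eqref{testing} to the power $q$, it becomes $\mu(Q)^{q(\alpha-1)}u(Q)\,\sigma(Q)^{q/p'}\le1$; since $\tfrac1p+\tfrac1{p'}=1$ the left side is invariant under $(c,d)\mapsto(\lambda c,\lambda d)$, so we may take $c=1$, $d=x>1$, and with $y:=x^{1-\alpha}>1$ and $r:=q/p$ (so $q/p'=q-r$) the inequality to be proved is
$$\big(y^{r}-1\big)(y-1)^{q-r}\le\big(y^{1/(1-\alpha)}-1\big)^{q(1-\alpha)}.$$

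\emph{Step 3 (the one–variable inequality).} Put $\theta:=1/(1-\alpha)\ge1$; then $\alpha\ge1/p-1/q$ is exactly $\theta\ge\theta_0:=\tfrac{q}{1+q-r}=\tfrac{p'q}{p'+q}$, and $1\le\theta_0\le r$. Taking logarithms and multiplying by $q$, the inequality reads $h(r)+(q-r)h(1)\le\tfrac q\theta h(\theta)$ where $h(a):=\ln(y^a-1)$ for $a>0$. Now $h''(a)=-(\ln y)^2y^a(y^a-1)^{-2}<0$, so by Jensen applied to the points $r$, $1$ with normalized weights $\tfrac1{1+q-r}$, $\tfrac{q-r}{1+q-r}$ (whose barycentre is $\theta_0$) we get $h(r)+(q-r)h(1)\le(1+q-r)h(\theta_0)=\tfrac{q}{\theta_0}h(\theta_0)$; and $a\mapsto h(a)/a$ is increasing on $(0,\infty)$, since this is $ah'(a)\ge h(a)$, i.e.\ $z\ln z\ge(z-1)\ln(z-1)$ with $z=y^a>1$, and indeed $z\ln z-(z-1)\ln(z-1)=(z-1)\ln\tfrac{z}{z-1}+\ln z>0$ (an elementary logarithmic estimate, cf.\ Lemma~\ref{technical}). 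Because $\theta\ge\theta_0$, the monotonicity gives $\tfrac{q}{\theta_0}h(\theta_0)\le\tfrac q\theta h(\theta)$, and chaining the two estimates proves the inequality.

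The main obstacle is Step~3. The ``obvious'' bound for $\mathcal{M}^\alpha_\mathcal{T}(\sigma\chi_Q)$ coming from the ambient interval $[0,\tfrac{k+1}N]$ is too weak (it would force a testing constant growing with $k$), so one genuinely needs the sharp pointwise value $\mu(Q)^{\alpha-1}\sigma(Q)$ from Step~1; moreover the resulting inequality degenerates to an equality exactly at $\alpha=1/p-1/q$, which is why the proof must split into the concavity part (the endpoint) and the monotonicity–in–$\theta$ part (the case $\alpha>1/p-1/q$). The bookkeeping for the arbitrary deep subdivisions of the blocks is then routine, since they are again intervals $(c',d']$ with $c'>0$ covered by Step~1.
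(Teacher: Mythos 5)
Your proof is correct, and while Steps 1--2 (computing $\mathcal{M}^\alpha_\mathcal{T}(\sigma\chi_Q)$ exactly on each tree element and reducing the testing condition to a one-variable inequality) follow essentially the same path as the paper, your Step 3 is a genuinely different and arguably cleaner treatment of that inequality. The paper keeps both endpoints $\ell,r$ of $Q$, shows the left-hand side is nonincreasing in $\ell$ by differentiating, reduces the resulting derivative inequality to the endpoint case $\alpha=1/p-1/q$ (using that its right-hand side increases in $\alpha$), and then applies Jensen to the convex function $G(s)=(x^{1/s}-1)^{-1}$ --- whose convexity is exactly what the separate technical lemma (Lemma \ref{technical}) is needed for. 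You instead exploit the scale invariance of the reduced inequality to normalize $c=1$, take logarithms, and apply Jensen directly to the concave function $h(a)=\ln(y^a-1)$, whose concavity is a one-line second-derivative computation; the case $\alpha>1/p-1/q$ is then absorbed by the monotonicity of $a\mapsto h(a)/a$, itself an elementary logarithmic estimate. Your route thus bypasses the derivative-in-$\ell$ computation and makes the technical lemma unnecessary, at the cost of proving only the inequality itself rather than the slightly stronger monotonicity statement the paper establishes. One small point worth making explicit if you write this up: like the paper's argument, yours uses $\alpha\ge 1/p-1/q$ (your condition $\theta\ge\theta_0$), which is consistent with the lemma's role in the sharpness proof, but the hypothesis should be stated.
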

\begin{proof}
Fix $Q\in \mathcal{T}$. We consider two cases. If $Q=[0,r]$ for some $r>0$, then for any $\omega\in [a,b]\subset Q$ we have
$$ \mu([a,b])^\alpha \langle \sigma \rangle_{[a,b]}\leq \mu([0,b])^\alpha \langle \sigma  \rangle_{[0,b]}=b^\alpha \cdot b^{-\alpha}=1,$$
since $\sigma$ is decreasing on $[0,1]$. This gives $\mathcal{M}^\alpha_\mathcal{T} (\sigma\chi_Q)(\omega)=1$ and therefore,
\begin{align*}
 \left(\int_Q (\mathcal{M}^\alpha_\mathcal{T} (\sigma\chi_Q))^q \mbox{d}u\right)^{1/q}&=\left(\int_0^r u(\omega)\mbox{d}\omega\right)^{1/q}
=r^{(1-\alpha)/p}=\left(\int_0^r \sigma(\omega)\mbox{d}\omega\right)^{1/p},
\end{align*}
as desired. Now we consider the case in which $Q=[\ell,r]$ with $\ell\neq 0$. Again we start with the formula for the fractional maximal function of $\sigma\chi_Q$. For an arbitrary interval $[a,b]$ with $\mu([a,b]\cap Q)>0$, we have
$$ \mu([a,b])^\alpha \langle \sigma\chi_Q\rangle_{[a,b]}\leq \mu([a,b]\cap Q)^\alpha \langle \sigma\rangle_{[a,b]\cap Q}.$$
 Next, for any interval $[a,b]$ contained within $Q$ (that is, satisfying $\ell \leq a\leq b\leq r$), we consider  the quantity
$$ F(a,b):=\mu([a,b])^\alpha \langle \sigma \rangle_{[a,b]}=(b-a)^{\alpha-1}(b^{1-\alpha}-a^{1-\alpha}).$$
Arguing as above, we claim  that $F(a,b)\leq F(\ell,b)$.  Indeed, $F(a,b)$ is the product of $(b-a)^\alpha$ and $\langle \sigma\rangle_{[a,b]}$, and both factors are decreasing functions of $a$ since  $\sigma$ is decreasing. Furthermore, we compute that
$$ F_b(a,b)=(1-\alpha)a(b-a)^{\alpha-2}\big(a^{-\alpha}-b^{-\alpha}\big)\geq 0,$$
and hence $F(a,b)\leq F(\ell, b)$, as claimed. 

 Putting all the above facts together and noting that the elements of $\mathcal{T}$ are intervals, we obtain $\mathcal{M}^\alpha(\sigma\chi_Q)(\omega)=F(\ell,r).$ Consequently, \eqref{testing} will be established if we show that
$$ F(\ell,r)\left(\int_\ell^r u(\omega)\mbox{d}\omega\right)^{1/q}\leq \left(\int_\ell^r \sigma(\omega)\mbox{d}\omega\right)^{1/p},$$
which is equivalent to
$$ (r-\ell)^{\alpha-1}\left(r^{1-\alpha}-\ell^{1-\alpha}\right)^{1-1/p}\left(r^{q(1-\alpha)/p}-\ell^{q(1-\alpha)/p}\right)^{1/q}\leq 1.$$
If $\ell=0$, then both sides are equal: we have proved this in the previous case. Thus, it is enough to show that the left-hand side, considered as a function of $\ell \in [0,r]$, is nonincreasing. Denoting the left-hand side by $L(\ell)$, we check that $L'(\ell)\leq 0$ is equivalent to
$$ \frac{1}{r-\ell}\leq \frac{(1-\frac{1}{p})\ell^{-\alpha}}{r^{1-\alpha}-\ell^{1-\alpha}}+\frac{\frac{1}{p}a^{q(1-\alpha)/p-1}}{r^{q(1-\alpha)/p}-\ell^{q(1-\alpha)/p}},
$$
or, after the substitution $x=r/\ell\geq 1$, that 
$$ \frac{1}{x-1}\leq \frac{1-1/p}{x^{1-\alpha}-1}+\frac{1/p}{x^{q(1-\alpha)/p}-1}.$$
Note that it is enough to check this estimate for $\alpha=1/p-1/q$, since the right-hand side is an increasing function of $\alpha$. However, this boundary case is   Jensen's inequality for the function $G(s)= (x^{1/s}-1)^{-1}$, $s>0$, since
$$ \left(1-\frac{1}{p}\right)\cdot \frac{1}{1-\alpha}+\frac{1}{p}\cdot \frac{p}{q(1-\alpha)}=\frac{1-\frac{1}{p}+\frac{1}{q}}{1-\alpha}=1.$$ 
Thus we will be done if we prove that $G''(s)\geq 0$. We check that 
$$ G''(s)=(x^{1/s}-1)^{-2}s^{-3}\left(\frac{x^{1/s}+1}{s(x^{1/s}-1)}\ln x-2\right),$$
and it suffices to make use of Lemma \ref{technical}.
\end{proof}

We return to the sharpness. 
Since $u$, $v$ satisfy the testing condition, the inequality \eqref{mainin} implies
$$ \left(\int_0^1 \big(\mathcal{M}^\alpha_\mathcal{T} \varphi\big)^q \mbox{d}u\right)^{1/q}\leq \tilde{c}_{p,q}\left(\int_0^1 f^p\mbox{d}v\right)^{1/p}.$$
Now, any $\omega\in [0,1]$ is contained in the interval $\big[0,\lceil N\omega\rceil/N\big]$, which belongs to $\mathcal{T}$. Consequently, by the very definition of the fractional maximal function,
$$ \mathcal{M}^\alpha_\mathcal{T} \varphi\geq \mu\big(\big[0,\lceil N\omega\rceil /N\big]\big)^{\alpha-1}\int_0^{\lceil N\omega\rceil/N} \varphi\mbox{d}\mu.$$
Combining this bound with the previous inequality and letting $N\to \infty$ gives
\begin{align*}
 &\left(\int_0^1 \left(t^{\alpha-1}\int_0^t \varphi\right)^q \frac{(1-\alpha)q}{p}t^{q(1-\alpha)/p-1}\mbox{d}t\right)^{1/q}\\
&\qquad \qquad\qquad \qquad  \qquad \leq \tilde{c}_{p,q}\left(\int_0^1 \varphi^p(1-\alpha)^{p-1}t^{\alpha(p-1)}\mbox{d}t\right)^{1/p}.
\end{align*}
Let $\beta=1/(1-\alpha)$. Substituting $t=s^\beta$ and letting $f(r)=\varphi(r^\beta)r^{\beta-1}$, the above inequality is equivalent to 
$$ \left(\int_0^1 u^{q/p-1}\left(\frac{1}{u}\int_0^u f\right)^q\mbox{d}u\right)^{1/q}\leq \left(\frac{p}{q}\right)^{1/q}\tilde{c}_{p,q}\left(\int_0^1 f^p\right)^{1/p},$$
which is Bliss' inequality localized to $[0,1]$. It remains to apply a dilation argument: plugging $\tilde{f}(u/T)=f(u)$ and letting $T\to \infty$, we obtain Bliss' inequality on $(0,\infty)$, with the same constant $\left(p/q\right)^{1/q}\tilde{c}_{p,q}$. Consequently, we must have $\tilde{c}_{p,q}\geq C_{p,q}$, and this is precisely the desired sharpness.


\begin{thebibliography}{99}
\bibitem{BO}{ R. Ba\~nuelos, A. Os\k ekowski, {\it Sharp weak type inequalities for fractional integral operators}, Potential Anal. {\bf 47} (2017), 103--121.}

\bibitem{Be} R. Bellman, Dynamic programming. Reprint of the 1957 edition. With a new introduction by Stuart Dreyfus. Princeton Landmarks in Mathematics. Princeton University Press, Princeton, NJ, 2010. xxx+340 pp. 

\bibitem{Bl}{G. Bliss, {\it An integral inequality}, J. London. Math. Soc. \textbf{5} (1930), pp. 40–-46.}

\bibitem{B0} D. L. Burkholder, {\it A geometrical characterization of Banach spaces in which martingale difference sequences are unconditional},  Ann. Probab. {\bf 9} (1981), no. 6, 997--1011.

\bibitem{Bur1}
{D. L. Burkholder, 
{\it  Boundary value problems and sharp inequalities for martingale transforms}, 
Ann. Probab. {\bf 12} (1984), 647--702.}

\bibitem{Christ}
M. Christ, H. Liu, and A. Zhang, \emph{Sharp
  {H}ardy-{L}ittlewood-{S}obolev inequalities on quaternionic {H}eisenberg
  groups}, Nonlinear Anal. \textbf{130} (2016), 361--395. \MR{3424625}


\bibitem{Esc}  J. F. Escobar, Sharp constant in a Sobolev trace inequality, Indiana Univ. Math. J. {\bf 37} (1988) 687--698.

\bibitem{Frank1}
R.~L. Frank and E.~H. Lieb, \emph{Inversion positivity and the sharp
  {H}ardy-{L}ittlewood-{S}obolev inequality}, Calc. Var. Partial Differential
  Equations \textbf{39} (2010), no.~1-2, 85--99. \MR{2659680}

\bibitem{Frank2}
R.~L. Frank and E.~H. Lieb \emph{Sharp constants in several inequalities on the {H}eisenberg
  group}, Ann. of Math. (2) \textbf{176} (2012), no.~1, 349--381. \MR{2925386}


\bibitem{Gr}{L. Grafakos, {\it Classical Fourier analysis}, Second Edition, Springer, New York, 2008.}
\bibitem{He}{L. I. Hedberg, {\it On certain convolution inequalities}, Proc. Amer. Math. Soc. {\bf 36} (1972), 505--510.}

\bibitem{L}{M. T. Lacey, K. Moen, C. P\'erez and R. H. Torres, {\it Sharp weighted bounds for fractional integral operators}, J. Funct. Anal. \textbf{259} (2010), pp. 1073–-1097.}

\bibitem{LMS} K. Li, K. Moen, W. Sun, {\it Sharp weighted inequalities for multilinear fractional maximal operators and fractional integrals},  Math. Nachr. {\bf 288} (2015), no. 5-6, 619--632.

\bibitem{Lieb} E. H. Lieb, {\it Sharp constants in the Hardy-Littlewood-Sobolev and related inequalities},  Ann. of Math. (2) 118 (1983), no. 2, 349--374.
 
\bibitem{M}{B. Muckenhoupt, {\it Weighted norm inequalities for the Hardy maximal function}, Trans. Amer. Math. Soc. \textbf{165} (1972), 207--226.}

\bibitem{MW}{B. Muckenhoupt and R. Wheeden, {\it Weighted norm inequalities for fractional integrals}, Trans. Amer. Math. Soc. \textbf{192} (1974), pp. 261–-274.}

\bibitem{NT} F. L. Nazarov and S. R. Treil, \emph{The hunt for a Bellman function: applications to estimates for singular integral operators and to other classical problems of harmonic analysis,} St. Petersburg Math. J. {\bf 8} (1997), pp. 721--824. 

\bibitem{NTV}{F. L. Nazarov, S. R. Treil and A. Volberg, {\it The Bellman functions and two-weight inequalities for Haar multipliers}, J. Amer. Math. Soc., \textbf{12} (1999), pp. 909--928.}

\bibitem{Os0}{ A. Os\k ekowski, {\it Sharp Weak Type Inequality for Fractional Integral Operators Associated with d-Dimensional Walsh-Fourier Series}, Integr. Equ. Oper. Theory {\bf 78} (2014), 589--600.}

\bibitem{Os} A. Os\k ekowski, {\it A new approach to Hardy-type inequalities}, Arch. Math. (Basel) 104 (2015), no. 2, 165--176.

\bibitem{Ose}{A.~Os\k ekowski, {\it Sharp martingale and semimartingale inequalities}, Monografie Matematyczne \textbf{72}, Birkh\"auser, 2012.}

\bibitem{P}{S. Petermichl, {\it The sharp bound for the Hilbert transform on weighted Lebesgue spaces in terms of the 
classical $A_p$-characteristic}, Amer. J. Math. \textbf{129} (2007), no. 5, 1355--1375.}


\bibitem{Sa}{E. T. Sawyer, {\it A characterization of a two-weight norm inequality for maximal operators}, Studia Math. \textbf{75} (1982), 1--11.}

\bibitem{SV}{L. Slavin and V. Vasyunin, {\it Sharp results in the integral-form John-Nirenberg inequality}, Trans. Amer. Math. Soc. 363 (2011),  4135--4169.}

\bibitem{SV2}{L. Slavin and A. Volberg, {\it Bellman function and the $H^1$-BMO duality}, Harmonic analysis, partial differential equations, and related topics, 113--126, Contemp. Math., 428, Amer. Math. Soc., Providence, RI, 2007.}

\bibitem{St}{E. M. Stein, {\it Singular integrals and
Differentiability Properties of Functions,} Princeton University
Press, Princeton, 1970.}

\bibitem{T}{G. Talenti, {\it Best constant in Sobolev inequality}, Ann. Mat. Pura Appl. \textbf{110} (1976), 
 353–-372.}




\bibitem{V}{V. Vasyunin, {\it The exact constant in the inverse H\"older inequality for Muckenhoupt weights (Russian)}, Algebra i Analiz \textbf{15} (2003), 73--117; translation in St. Petersburg Math. J. \textbf{15} (2004), pp. 49--79.}

\bibitem{VV} V. Vasyunin and A.  Volberg, The Bellman function technique in harmonic analysis. Cambridge Studies in Advanced Mathematics, 186. Cambridge University Press, Cambridge, 2020.

\bibitem{Varo}
N.~Th. Varopoulos, \emph{Hardy-{L}ittlewood theory for semigroups}, J. Funct.
  Anal. \textbf{63} (1985), no.~2, 240--260. \MR{803094}

\end{thebibliography}
\end{document}